\newtheorem{theorem}{Theorem}[section]
\newtheorem{corollary}[theorem]{Corollary}
\newtheorem{lemma}[theorem]{Lemma}
\newtheorem{proposition}[theorem]{Proposition}
\theoremstyle{definition}
\newtheorem{definition}[theorem]{Definition}
\theoremstyle{remark} \theoremstyle{remark}
\newtheorem{remark}[theorem]{Remark}
\newtheorem{example}[theorem]{Example}
\numberwithin{equation}{section}
\newcommand{\Rr}{\mathbb R}
\newcommand{\bS}{\mathbb S}
\renewcommand{\d}{\mathrm d}                              
\title{Cosymplectic $p$-spheres}
\author[B. Cappelletti-Montano]{Beniamino Cappelletti-Montano}
 \address{Dipartimento di Matematica e Informatica, Universit\`a degli Studi di
 Cagliari, Via Ospedale 72, 09124 Cagliari, Italy}
 \email{b.cappellettimontano@gmail.com}
\author[A. De Nicola]{Antonio De Nicola}
 \address{CMUC, Department of Mathematics, University of Coimbra, 3001-501 Coimbra, Portugal}
 \email{antondenicola@gmail.com}
\author[I. Yudin]{Ivan Yudin}
 \address{CMUC, Department of Mathematics, University of Coimbra, 3001-501 Coimbra, Portugal}
 \email{yudin@mat.uc.pt}
\keywords{cosymplectic circles, contact circles}
\subjclass[2000]{Primary 53C25, 53D35 }
\thanks{Research partially supported by CMUC, funded by the European program
COMPETE/FEDER, by FCT (Portugal) grants PEst-C/MAT/UI0324/2011 (A.D.N. and I.Y.), by MICINN (Spain) grants
MTM2011-15725-E, MTM2012-34478 (A.D.N.),  and by PRIN 2010/11 -- Variet\`{a} reali e complesse: geometria, topologia e analisi armonica -- Italy (B.C.M.).}
\begin{document}

\begin{abstract}
We introduce cosymplectic circles and cosymplectic spheres, which are the analogues in the cosymplectic setting of contact circles and contact spheres. We provide a complete classification of compact 3-manifolds that admit a cosymplectic circle.
The properties of tautness and roundness for a cosymplectic $p$-sphere are studied. To any taut cosymplectic circle on a three-dimensional manifold $M$ we are able to canonically associate  a complex structure and a conformal symplectic couple on $M\times \Rr$.
We prove that a cosymplectic circle in dimension three is round if and only if it is taut. On the other hand, we provide examples in higher dimensions of cosymplectic circles which are taut but not round and examples of cosymplectic circles which are round but not taut.
\end{abstract}

\maketitle

\section{Introduction}
The notion of cosymplectic structure was introduced by Libermann in the late 50s as a pair ($\eta$, $\Omega$), where $\eta$ is a closed 1-form and $\Omega$ is a closed 2-form on a $(2n+1)$-dimensional manifold $M$, such that $\eta\wedge\Omega^n$ is a volume form.
Cosymplectic manifolds play an important role in the geometric description of time-dependent mechanics (see \cite{survey} and references therein). Starting from 1967, when Blair defined an adapted Riemannian structure on a cosymplectic manifold, a study of the metric properties on these manifolds has been also carried forward. Recently, new remarkable results on cosymplectic manifolds and their Riemannian counterpart (sometimes called
coK{\"{a}}hler manifolds) appeared in \cite{bazzoni1, fetcu, li}.

In this paper,  we introduce the notion of cosymplectic circle and, more generally, of cosymplectic $p$-sphere. We were inspired by the concepts  of contact circle and contact $p$-sphere, introduced by Geiges and Gonzalo (\cite{geiges95}) and then generalized by Zessin (\cite{zessin05}). We brought these ideas into the cosymplectic setting.

Let $(\eta_{1},\Omega_{1})$ and $(\eta_{2},\Omega_{2})$ be two cosymplectic structures on a manifold $M$. We say that they generate a cosymplectic circle if the pair  $(\lambda_1\eta_{1}+\lambda_2\eta_{2},\lambda_1\Omega_{1}+\lambda_2\Omega_{2})$ is a  cosymplectic structure for \ every \ $\lambda=(\lambda_{1}, \lambda_{2})\in\mathbb{S}^{1}$. In \cite{geiges95} Geiges and Gonzalo classified closed 3-manifolds that admit a taut contact circle.
In the present paper, we provide a complete classification of 3-dimensional compact manifolds that admit a cosymplectic circle.
Furthermore, we introduce the notion of tautness and of roundness for a cosymplectic $p$-sphere. To any taut cosymplectic circle on a three-dimensional manifold $M$ we canonically associate a complex structure and a conformal symplectic couple on $M\times \Rr$.
In dimension three a cosymplectic circle is proven to be round if and only if it is taut. In higher dimensions we provide examples of cosymplectic circles which are taut but not round and examples of cosymplectic circles which are round but not taut.

It is not difficult to show that there exist no cosymplectic circles in any manifold of dimension $4n+1$ and, a fortiori, no cosymplectic $p$-spheres.
In the last section of the paper, devoted to the relation between Riemannian 3-structures and almost cosymplectic spheres, we show that 3-cosymplectic manifolds (\cite{bai1}) provide a source of examples of cosymplectic spheres which are both round and taut in any dimension $4n+3$.
We also improve the result of Zessin  that any 3-Sasakian manifold admits a contact sphere which is both taut and round (cf. \cite[Proposition 4]{zessin05}). Indeed, we show that any 3-Sasakian manifold admits a sphere of Sasakian structures which is both taut and round.
We do this by introducing  the class of quasi-contact spheres, which include both cosymplectic (rank 1) and contact spheres (rank 2n+1), and we prove that any 3-quasi-Sasakian manifold defines a taut and round quasi-contact sphere.

\section*{Acknowledgements}
The authors are grateful to the anonymous reviewer for a careful checking of the details and for correcting the proof of Corollary 4.4.


\section{Main definitions and examples}
An \emph{almost cosymplectic structure} on a smooth manifold $M$ is given by a pair $(\eta,\Omega)$, where $\eta$ is a $1$-form  and $\Omega$ a  $2$-form on $M$ such that $\eta\wedge\Omega^n$ is a volume form. Thus, in particular, $\dim(M)=2n+1$. The almost cosymplectic structure $(\eta,\Omega)$ is said to be a \emph{contact
 structure} if $d\eta=\Omega$ and a \emph{cosymplectic structure} if $\eta$ and $\Omega$ are both closed.

On any almost cosymplectic manifold there exists a global vector field $\xi$ uniquely determined by the conditions
\begin{equation}\label{reeb}
i_{\xi}\eta=1, \ \ \ i_{\xi}\Omega=0,
\end{equation}
and called the \emph{Reeb vector field}.

\begin{definition}\label{acsphere}
Let $(\eta_{1},\Omega_{1}), \ldots, (\eta_{p+1},\Omega_{p+1})$ be $p+1$ almost cosymplectic structures on $M$. Consider the family
$\left\{(\eta_{\lambda},\Omega_{\lambda})\right\}_{\lambda\in\mathbb{S}^p}$ where
\begin{equation*}
\eta_{\lambda}:=\lambda_{1}\eta_{1}+\ldots+\lambda_{p+1}\eta_{p+1}, \ \ \ \Omega_{\lambda}:=\lambda_{1}\Omega_{1}+\ldots+\lambda_{p+1}\Omega_{p+1}.
\end{equation*}
If the pair  $(\eta_{\lambda},\Omega_{\lambda})$ is an almost cosymplectic structure for  every $\lambda=(\lambda_{1}, \ldots, \lambda_{p+1})\in\mathbb{S}^{p}$, then the family
$\left\{(\eta_{\lambda},\Omega_{\lambda})\right\}_{\lambda\in\mathbb{S}^p}$  is called  an \emph{almost  cosymplectic  $p$-sphere  generated  by  $(\eta_{1},\Omega_{1}),  \ldots, (\eta_{p+1},\Omega_{p+1})$}. An almost
cosymplectic $p$-sphere  is called  an \emph{almost cosymplectic circle} or an \emph{almost cosymplectic sphere} if $p=2$ or $p=3$, respectively.
\end{definition}

We shall denote an almost cosymplectic $p$-sphere either by $\left\{(\eta_{\lambda},\Omega_{\lambda})\right\}_{\lambda\in\mathbb{S}^p}$ or by indicating a set of generators, such as $\left\{(\eta_{1},\Omega_{1}), \ldots, (\eta_{p+1},\Omega_{p+1})\right\}$.

There are two interesting types of almost cosymplectic $p$-spheres, which are introduced by the following definitions.

\begin{definition}
An almost cosymplectic $p$-sphere $\left\{(\eta_{\lambda},\Omega_{\lambda})\right\}_{\lambda\in\mathbb{S}^p}$   is said to be \emph{taut} if all its
elements give the same volume form, i.e.
\begin{equation*}
\left(\sum_{i=1}^{p+1}\lambda_{i}\eta_{i}\right)  \wedge \left(\sum_{j=1}^{p+1}\lambda_{j}\Omega_{j}\right)^n = \left(\sum_{i=1}^{p+1}\lambda'_{i}\eta_{i}\right)  \wedge \left(\sum_{j=1}^{p+1}\lambda'_{j}\Omega_{j}\right)^n\end{equation*}
for any $(\lambda_{1}, \ldots, \lambda_{p+1}),(\lambda'_{1}, \ldots, \lambda'_{p+1})\in \mathbb{S}^{p}$.
\end{definition}

\begin{definition}
An almost cosymplectic $p$-sphere $\left\{(\eta_{\lambda},\Omega_{\lambda})\right\}_{\lambda\in\mathbb{S}^p}$ is said to be \emph{round} if for any
$\lambda=(\lambda_{1}, \ldots, \lambda_{p+1})\in \mathbb{S}^{p}$ the vector field $\lambda_{1}\xi_{1}+\ldots+\lambda_{p+1}\xi_{p+1}$ is the Reeb vector field of the almost cosymplectic structure $(\eta_{\lambda},\Omega_{\lambda})$.
\end{definition}

Let $\left\{(\eta_{\lambda},\Omega_{\lambda})\right\}_{\lambda\in\mathbb{S}^p}$ be an almost cosymplectic $p$-sphere. Notice that if all the almost
cosymplectic structures $(\eta_{1},\Omega_{1}), \ldots, (\eta_{p+1},\Omega_{p+1})$ are contact structures, then any element
$(\eta_{\lambda},\Omega_{\lambda})$ of the almost cosymplectic $p$-sphere  is a contact structure, since
$d\eta_{\lambda}=\lambda_{1} d\eta_{1}+\ldots+\lambda_{p}d\eta_{p}=\lambda_{1}\Omega_{1}+\ldots+\lambda_{p}\Omega_{p}=\Omega_{\lambda}$. Thus  Definition
\ref{acsphere} generalizes the notion of \emph{contact circles} and \emph{contact spheres}, introduced by Geiges and Gonzalo in \cite{geiges95}, and that
one of \emph{contact $p$-spheres}, introduced by Zessin in \cite{zessin05}.

On the other hand, if the generating almost cosymplectic structures $$(\eta_{1},\Omega_{1}),\ldots, (\eta_{p+1},\Omega_{p+1})$$ are cosymplectic, so is any element of the almost cosymplectic $p$-sphere $\left\{(\eta_{\lambda},\Omega_{\lambda})\right\}_{\lambda\in\mathbb{S}^p}$. In this case we shall speak of
\emph{cosymplectic $p$-sphere}.

Now we describe a class of examples of cosymplectic spheres in dimension $3$.

\begin{example}\label{3manifold}
Let $M$ be any orientable three-dimensional manifold and  let $\eta_{1}$, $\eta_{2}$, $\eta_{3}$ be three $1$-forms which are linearly independent at every point of $M$. Notice that such $1$-forms always exist, by Stiefel's theorem (\cite{stiefel}).
Set
\begin{equation*}
\Omega_{1}:=\eta_{2}\wedge\eta_{3}, \ \  \Omega_{2}:=\eta_{3}\wedge\eta_{1}, \ \ \Omega_{3}:=\eta_{1}\wedge\eta_{2}.
\end{equation*}
Then it is easy to see that $(\eta_{1},\Omega_{1})$, $(\eta_{2},\Omega_{2})$, $(\eta_{3},\Omega_{3})$ are almost cosymplectic structures. Moreover, a direct computation shows that
\begin{equation*}
\left(\lambda_{1}\eta_{1}+\lambda_{2}\eta_{2}+\lambda_{3}\eta_{3}\right)\wedge\left(\lambda_{1}\Omega_{1}+\lambda_{2}\Omega_{2}+\lambda_{3}\Omega_{3}\right)
=\eta_{1}\wedge\eta_{2}\wedge\eta_{3}\neq 0.
\end{equation*}
Therefore we can conclude that $(\eta_{1},\Omega_{1})$, $(\eta_{2},\Omega_{2})$, $(\eta_{3},\Omega_{3})$ generate a taut almost cosymplectic sphere on $M$. Furthermore, such an almost cosymplectic
sphere is cosymplectic if the forms $\eta_{1}$, $\eta_{2}$, $\eta_{3}$ are closed.
In particular, the above construction applies to the $3$-torus $\mathbb{T}^3$ endowed with the  $1$-forms $\eta_{\alpha}:=d\theta_\alpha$, $\alpha\in\left\{1,2,3\right\}$, where $\theta_1, \theta_2, \theta_3$ are the coordinates on $\mathbb{T}^3$.
\end{example}

Now we consider another special case of Example \ref{3manifold}, where the $3$-manifold is the Heisenberg group.

\begin{example}\label{heisenberg}
Let $H$ be the Heisenberg group, whose Lie algebra structure is given by
\begin{equation*}
[e_{1},e_{3}] = 0, \ \ \ [e_{1},e_{2}]=\gamma e_{3}, \ \ \ [e_{3},e_{2}]=0.
\end{equation*}
Let $\eta_{1}$, $\eta_{2}$, $\eta_3$ be the $1$-forms dual to $e_{1}$, $e_{2}$, $e_{3}$, respectively. We have
\begin{equation*}
\d\eta_{1}=\d\eta_{2}=0,\qquad\d\eta_{3}=-\frac{\gamma}{2}\eta_{1}\wedge \eta_{2}.
\end{equation*}

Notice that the resulting sphere is neither contact not cosymplectic, but $(\eta_{1},\Omega_{1})$ and $(\eta_{2},\Omega_{2})$ generate a left-invariant  taut cosymplectic circle on $H$.
\end{example}

The result of Zessin \cite{zessin05} about the non-existence of contact circles in dimension $4n+1$ can be easily generalized to the almost cosymplectic case.

\begin{theorem}\label{4n+1}
Manifolds of dimension $4n+1$ do not admit any almost cosymplectic circle and thus any almost cosymplectic $p$-sphere for $p\geq 2$.
\end{theorem}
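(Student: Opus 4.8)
The plan is to argue by contradiction and reduce everything to the elementary fact that a homogeneous polynomial of odd degree in two real variables must vanish somewhere on the unit circle. First I would note that it suffices to rule out almost cosymplectic circles: if $\left\{(\eta_{\lambda},\Omega_{\lambda})\right\}_{\lambda\in\mathbb{S}^p}$ is an almost cosymplectic $p$-sphere with $p\geq 2$, then restricting to the great circle $\lambda_3=\cdots=\lambda_{p+1}=0$ shows that $(\eta_{1},\Omega_{1})$ and $(\eta_{2},\Omega_{2})$ generate an almost cosymplectic circle. So assume $\dim M=4n+1$, write $2m+1:=4n+1$ (thus $m=2n$), and suppose for contradiction that $(\eta_{1},\Omega_{1})$, $(\eta_{2},\Omega_{2})$ generate an almost cosymplectic circle on $M$.

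Next I would work pointwise. Fix $x\in M$ and a nonzero element $\mu\in\Lambda^{2m+1}T_x^{*}M$ (no global orientation of $M$ is needed, since this space is one-dimensional). For $\lambda=(\lambda_{1},\lambda_{2})\in\Rr^2$ define $P(\lambda_{1},\lambda_{2})\in\Rr$ by
\begin{equation*}
\bigl(\eta_{\lambda}\wedge\Omega_{\lambda}^{m}\bigr)_x = P(\lambda_{1},\lambda_{2})\,\mu.
\end{equation*}
Since $\eta_{\lambda}=\lambda_{1}\eta_{1}+\lambda_{2}\eta_{2}$ is linear in $\lambda$ and $\Omega_{\lambda}^{m}=(\lambda_{1}\Omega_{1}+\lambda_{2}\Omega_{2})^{m}$ is homogeneous of degree $m$ in $\lambda$, the coefficient $P$ is a homogeneous polynomial of degree $m+1=2n+1$ in $\lambda_{1},\lambda_{2}$, hence of odd degree; in particular $P(-\lambda)=-P(\lambda)$.

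Now I would extract the contradiction. The map $\theta\mapsto P(\cos\theta,\sin\theta)$ is continuous on $[0,\pi]$ and, by the oddness just noted, satisfies $P(\cos\pi,\sin\pi)=P(-1,0)=-P(1,0)=-P(\cos 0,\sin 0)$; by the intermediate value theorem there is $\theta_{0}\in[0,\pi]$ with $P(\cos\theta_{0},\sin\theta_{0})=0$ (the degenerate cases $P\equiv 0$ or $P(1,0)=0$ giving the conclusion at once). Setting $\lambda^{0}:=(\cos\theta_{0},\sin\theta_{0})\in\mathbb{S}^{1}$ we obtain $\bigl(\eta_{\lambda^{0}}\wedge\Omega_{\lambda^{0}}^{m}\bigr)_x=0$, so $\eta_{\lambda^{0}}\wedge\Omega_{\lambda^{0}}^{m}$ is not a volume form, contradicting the requirement that $(\eta_{\lambda^{0}},\Omega_{\lambda^{0}})$ be an almost cosymplectic structure. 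There is no serious obstacle here: the only points to watch are the parity bookkeeping ($\dim M=4n+1$ forces $m+1$ odd) and the fact that the argument is genuinely pointwise, so that orientability of $M$ plays no role.
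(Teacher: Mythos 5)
Your proof is correct and follows essentially the same route as the paper's: both hinge on the observation that $\eta_{\lambda}\wedge\Omega_{\lambda}^{2n}$ is homogeneous of odd degree $2n+1$ in $(\lambda_{1},\lambda_{2})$, hence changes sign under $\lambda\mapsto-\lambda$ and must vanish somewhere on $\mathbb{S}^{1}$ by the intermediate value theorem, contradicting nondegeneracy. The only differences are presentational: you carry out the argument for general $n$ (the paper writes out $n=1$ and says higher dimensions are similar) and you make explicit the reduction from $p$-spheres to circles via a great circle, which the paper leaves implicit.
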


\begin{proof}
We will prove the theorem for $n=1$. Higher dimensions can be dealt with similarly.
Assume that $\left\{(\eta_{1},\Omega_{1}),(\eta_{2},\Omega_{2})\right\}$ is an almost cosymplectic circle on a smooth manifold $M$ of dimension $5$. Then, for any
$\lambda=(\lambda_{1},\lambda_{2})\in\mathbb{S}^1$ we have
\begin{equation}\label{5dim}
\begin{aligned}
\eta_{\lambda}\wedge\Omega_{\lambda}^2&=\lambda_{1}^{3}\eta_{1}\wedge\Omega_{1}^{2}+\lambda_{1}^2\lambda_{2}\left(2\eta_{1}\wedge\Omega_{1}\wedge\Omega_{2}+\eta_{2}\wedge\Omega_{1}^{2}\right)\\
&\quad+\lambda_{1}\lambda_{2}^{2}\left(2\eta_{2}\wedge\Omega_{1}\wedge\Omega_{2}+\eta_{1}\wedge\Omega_{2}^2\right)+\lambda_{2}^{3}\eta_{2}\wedge\Omega_{2}^{2}.
\end{aligned}
\end{equation}
Let $p$ be a point of $M$ and let $\left\{e_{1},e_{2},e_{3},e_{4},e_{5}\right\}$ be a basis of $T_{p}M$. Then in view of \eqref{5dim} we have that the real valued function
\begin{equation*}
(\lambda_{1},\lambda_{2})\in\mathbb{R}^2 \mapsto f(\lambda_{1},\lambda_{2}):= \eta_{\lambda}\wedge\Omega_{\lambda}^2 \left(e_{1},e_{2},e_{3},e_{4},e_{5}\right)
\end{equation*}
is the polynomial function of a homogeneous polynomial of degree $3$ in the indeterminates $\lambda_{1}, \lambda_{2}$. Thus, in particular, $f(-\lambda_{1},-\lambda_{2})=-f(\lambda_{1},\lambda_{2})$
for any $(\lambda_{1},\lambda_{2})\in\mathbb{S}^1$. Therefore $f$ should have some zero in $\mathbb{S}^1$, but this contradicts the fact that $(\eta_{\lambda},\Omega_{\lambda})$ is an
almost cosymplectic structure.
\end{proof}

Note that in contrast to the case of dimension $4n+1$, considered in Theorem~\ref{4n+1},
we shall see in Section \ref{3structures} that there are examples of cosymplectic spheres in any dimension $4n+3$.

\begin{example}
Let $(\eta_{1},\Omega_{1})$ and $(\eta_{2},\Omega_{2})$ be the cosymplectic structures on $\mathbb{R}^7$ given by
\begin{gather*}
\eta_{1}:=\d x_7, \ \ \ \Omega_{1}:=\d x_{1}\wedge \d x_{2} +\d x_{3}\wedge \d x_{4} + \d x_{5}\wedge \d x_{6} \\
\eta_{2}:=\d x_6, \ \ \ \Omega_{2}:=(\d x_{1} + \d x_{2})\wedge \d x_{3} + (\d x_{4} + \d x_{5})\wedge \d x_{7} - \d x_{2} \wedge \d x_{5}.
\end{gather*}
A straightforward computation shows that for any $(\lambda_{1},\lambda_{2})\in\mathbb{S}^1$ one has
\begin{equation*}
\left(\lambda_{1}\eta_{1}+\lambda_{2}\eta_{2}\right)\wedge\left(\lambda_{1}\Omega_{1}+\lambda_{2}\Omega_{2}\right)^3=6\left(\left(\lambda_{1}^2-\lambda_{2}^2\right)^2+\lambda_{1}^{2}\lambda_{2}^{2}\right)
\d x_{1}\wedge\dots\wedge\d x_{7}.
\end{equation*}
Thus $(\eta_{1},\Omega_{1})$ and $(\eta_{2},\Omega_{2})$  define a non-taut cosymplectic circle. Moreover, the cosymplectic circle is also not round. In fact, we have $\xi_1=\frac{\partial}{\partial x_7}$, $\xi_2=\frac{\partial}{\partial x_6}$ and for any $\lambda\in \mathbb{S}^1$
\begin{equation*}
i_{\lambda_1\xi_1+\lambda_2\xi_2}(\lambda_1\Omega_1+\lambda_2\Omega_2)=\lambda_1\lambda_2 (i_{\xi_1}\Omega_2+i_{\xi_2}\Omega_1)
=-\lambda_1\lambda_2(d x_4+2d x_5))\neq   0,
\end{equation*}
unless $\lambda_1=0$ or $\lambda_2=0$.
\end{example}

\begin{proposition}\label{novanish}
Let  $\left\{(\eta_{1},\Omega_{1}),(\eta_{2},\Omega_{2})\right\}$ be an almost cosymplectic circle. Then  $i_{\xi_2}\Omega_1$ and  $i_{\xi_1}\Omega_2$ nowhere vanish.
\end{proposition}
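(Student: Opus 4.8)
The plan is to argue pointwise and by contradiction, using only the defining inequality of an almost cosymplectic structure. Suppose that $i_{\xi_{1}}\Omega_{2}$ vanishes at some point $p\in M$; the case of $i_{\xi_{2}}\Omega_{1}$ is identical after exchanging the roles of the indices $1$ and $2$. Since $i_{\xi_{1}}\Omega_{1}=0$ on all of $M$ by the defining conditions \eqref{reeb} of the Reeb vector field, linearity gives
\begin{equation*}
i_{\xi_{1}}\Omega_{\lambda}\big|_{p}=\lambda_{1}\,i_{\xi_{1}}\Omega_{1}\big|_{p}+\lambda_{2}\,i_{\xi_{1}}\Omega_{2}\big|_{p}=0
\end{equation*}
for \emph{every} $\lambda=(\lambda_{1},\lambda_{2})\in\mathbb{S}^{1}$.

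The next step is to choose $\lambda$ so that $\eta_{\lambda}$ also annihilates $\xi_{1}$ at $p$. Put $a:=\eta_{2}(\xi_{1})(p)$; since $\eta_{1}(\xi_{1})=1$ by \eqref{reeb}, the map $\lambda\mapsto\eta_{\lambda}(\xi_{1})(p)=\lambda_{1}+\lambda_{2}a$ is a nonzero linear functional in $\lambda$, hence vanishes for $\lambda=\frac{1}{\sqrt{1+a^{2}}}(-a,1)\in\mathbb{S}^{1}$. Fix this $\lambda$. Then at $p$ we have simultaneously $i_{\xi_{1}}\Omega_{\lambda}=0$ and $i_{\xi_{1}}\eta_{\lambda}=0$.

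To conclude, apply the interior product $i_{\xi_{1}}$ to the volume form $\eta_{\lambda}\wedge\Omega_{\lambda}^{n}$. Using $i_{\xi_{1}}(\Omega_{\lambda}^{n})=n\,(i_{\xi_{1}}\Omega_{\lambda})\wedge\Omega_{\lambda}^{n-1}$ and the antiderivation property of $i_{\xi_{1}}$,
\begin{equation*}
i_{\xi_{1}}\bigl(\eta_{\lambda}\wedge\Omega_{\lambda}^{n}\bigr)=(i_{\xi_{1}}\eta_{\lambda})\,\Omega_{\lambda}^{n}-\eta_{\lambda}\wedge i_{\xi_{1}}(\Omega_{\lambda}^{n})=0\quad\text{at }p.
\end{equation*}
But $\eta_{\lambda}\wedge\Omega_{\lambda}^{n}$ has top degree $2n+1=\dim M$, while $\xi_{1}$ is nowhere zero (because $i_{\xi_{1}}\eta_{1}=1$), and a top-degree form killed by contraction with a nonzero vector must vanish; hence $\eta_{\lambda}\wedge\Omega_{\lambda}^{n}$ vanishes at $p$, contradicting that $(\eta_{\lambda},\Omega_{\lambda})$ is an almost cosymplectic structure. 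Therefore $i_{\xi_{1}}\Omega_{2}$, and by symmetry $i_{\xi_{2}}\Omega_{1}$, is nowhere vanishing.

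I do not expect a genuine obstacle here: the only thing one has to notice is that $i_{\xi_{1}}\Omega_{\lambda}=0$ alone is not yet a contradiction, so one must spend the remaining freedom in the choice of $\lambda$ to also force $\eta_{\lambda}(\xi_{1})=0$, after which nondegeneracy of $\eta_{\lambda}\wedge\Omega_{\lambda}^{n}$ fails. Everything else is a one-line computation with interior products.
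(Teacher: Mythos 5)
Your proof is correct, and it takes a genuinely shorter route than the paper's. The paper also starts from $i_{\xi_1}\Omega_\lambda(p)=0$ for all $\lambda\in\mathbb{S}^1$, and it contains your key degeneracy observation, but only defensively: it argues that $\eta_\lambda(\xi_1)(p)\neq 0$ for every $\lambda$ (since otherwise contracting the volume form $\eta_\lambda\wedge\Omega_\lambda^n$ with $\xi_1$ would kill it), then rescales $\xi_1$ to conclude that $\xi_\lambda(p)=f(\lambda)\xi_1(p)$ for a continuous function $f$ on $\mathbb{S}^1$ with $f(1,0)=1$ and $f(-1,0)=-1$, and invokes the intermediate value theorem to produce $\lambda_0$ with $\xi_{\lambda_0}(p)=0$, contradicting $\eta_{\lambda_0}(\xi_{\lambda_0})=1$. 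You instead spend the freedom in $\lambda$ up front: since $\lambda\mapsto\eta_\lambda(\xi_1)(p)=\lambda_1+\lambda_2\,\eta_2(\xi_1)(p)$ is a nonzero linear functional, it vanishes at an explicit point of $\mathbb{S}^1$, and for that $\lambda$ the contraction $i_{\xi_1}(\eta_\lambda\wedge\Omega_\lambda^n)$ vanishes at $p$ while $\xi_1(p)\neq 0$, which is impossible for a volume form. This makes the continuity/IVT step of the paper unnecessary, is fully explicit, and treats all dimensions $2n+1$ uniformly (the paper's displayed contraction $(\eta_\lambda\wedge\Omega_\lambda)(\xi_1,-)$ is written as if $n=1$, whereas your computation with $\Omega_\lambda^n$ is the clean general statement). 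The only thing the paper's longer detour yields beyond the proposition is the incidental observation that all $\xi_\lambda(p)$ would be parallel to $\xi_1(p)$, which is not needed for the conclusion.
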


\begin{proof}
Suppose that
\begin{equation}\label{reeb1}
(i_{\xi_1}\Omega_2)(p)=0,
\end{equation}
for some $p\in M$.
Now, let $(\eta_\lambda,\Omega_\lambda)$ be the almost cosymplectic structure given by $\eta_\lambda=\lambda_{1}\eta_{1}+\lambda_{2}\eta_{2}$ and
$\Omega_{\lambda}=\lambda_{1}\Omega_{1}+\lambda_{2}\Omega_{2}$, for some $\lambda=(\lambda_{1},\lambda_{2})\in \mathbb{S}^{1}$. Because of \eqref{reeb1} and
$i_{\xi_1}\Omega_1=0$  we have $(i_{\xi_1}\Omega_\lambda)(p)=0$. It follows that $\eta_\lambda(\xi_1)\neq 0$ at $p$. For, if $\eta_\lambda(\xi_1)(p)=0$ one would have
 that $\left(\eta_\lambda \wedge \Omega_\lambda\right) (\xi_1,-)=0$ at $p$, so $(\eta_\lambda,\Omega_\lambda)$ would not be an almost cosymplectic structure. Thus
 $\alpha:=\eta_\lambda(\xi_1)(p)\neq 0$. We put $\xi'_{1}:=\xi_{1}/\alpha$. Then $i_{\xi'_{1}}\Omega_\lambda=0$ and $i_{\xi'_1}\eta_\lambda=1$ at $p$.
 Consequently $\xi'_1(p)=\xi_\lambda(p)$ and we conclude that $\xi_\lambda$ is parallel to $\xi_1$ at $p$, for any  $\lambda\in\mathbb{S}^1$.
 For the almost cosymplectic structure $(-\eta_1,-\Omega_1)$, which belongs to the almost cosymplectic circle, the Reeb vector field is
 $-\xi_1$. Since the function $f\colon\mathbb{S}^1\longrightarrow \mathbb{R}$, defined by $\xi_\lambda(p) = f(\lambda) \xi_1(p)$,
 is continuous and takes the value $-1$ at $(-1,0)$ and the value $1$ at $(1,0)$, there exists some $\lambda_{0}\in\mathbb{S}^1$ such that $f(\lambda_{0})=0$, that is $\xi_{\lambda_{0}}(p)=0$. So we get a
 contradiction. Thus, the $1$-form $i_{\xi_1}\Omega_2$ nowhere vanishes. Similarly one shows that $i_{\xi_2}\Omega_1$ nowhere vanishes.
\end{proof}

\begin{corollary}\label{independence-reeb}
Let  $\left\{(\eta_{1},\Omega_{1}),(\eta_{2},\Omega_{2})\right\}$ be an almost cosymplectic circle. Then the Reeb vector fields $\xi_1$ and $\xi_2$ of $(\eta_{1},\Omega_{1})$ and $(\eta_{2},\Omega_{2})$  are everywhere linearly independent.
\end{corollary}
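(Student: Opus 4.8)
The plan is to deduce this immediately from Proposition~\ref{novanish}. First I would recall that a Reeb vector field is nowhere zero: the defining condition $i_{\xi_j}\eta_j = 1$ forces $\xi_j(p)\neq 0$ at every point $p\in M$, for $j=1,2$. Hence, if $\xi_1(p)$ and $\xi_2(p)$ were linearly dependent at some $p$, there would have to exist a nonzero scalar $c$ with $\xi_2(p) = c\,\xi_1(p)$.

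Next I would plug this relation into $i_{\xi_2}\Omega_1$ at $p$. By linearity of the contraction and the Reeb identity $i_{\xi_1}\Omega_1 = 0$, we get
\begin{equation*}
(i_{\xi_2}\Omega_1)(p) = c\,(i_{\xi_1}\Omega_1)(p) = 0.
\end{equation*}
But Proposition~\ref{novanish} asserts that the $1$-form $i_{\xi_2}\Omega_1$ nowhere vanishes, so this is a contradiction. Therefore $\xi_1$ and $\xi_2$ are linearly independent at every point of $M$.

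There is essentially no obstacle here: the whole content has already been extracted in Proposition~\ref{novanish}, and the only extra ingredient is the elementary observation that the Reeb field never vanishes, which rules out the degenerate case where one of $\xi_1(p),\xi_2(p)$ is zero. (One could equally run the argument symmetrically using $i_{\xi_1}\Omega_2$, writing $\xi_1(p)=c'\xi_2(p)$ instead; either direction closes the proof.)
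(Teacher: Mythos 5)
Your argument is correct and is exactly the derivation the paper intends: the corollary is stated without proof as an immediate consequence of Proposition~\ref{novanish}, and your reasoning (Reeb fields are nowhere zero because $\eta_j(\xi_j)=1$, so pointwise dependence would give $\xi_2(p)=c\,\xi_1(p)$ with $c\neq 0$ and hence $(i_{\xi_2}\Omega_1)(p)=c\,(i_{\xi_1}\Omega_1)(p)=0$, contradicting the proposition) is the intended one. Nothing is missing.
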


\begin{definition}\label{reebdistribution}
Given an almost cosymplectic $p$-sphere $ \left\{(\eta_{\lambda},\Omega_{\lambda})\right\}_{\lambda\in\mathbb{S}^p}$, its \emph{Reeb distribution}  $\mathcal{V}$ is the distribution  generated by the Reeb vector fields of the generators
$(\eta_{1},\Omega_{1}), \ldots, (\eta_{p+1},\Omega_{p+1})$, i.e.
\[
\mathcal{V}=  \langle \xi_{1},\ldots,\xi_{p+1} \rangle.
\]
\end{definition}

A natural question concerns the integrability of the Reeb distribution $\mathcal{V}$.
The following proposition shows that in the $3$-dimensional case $\mathcal{V}$ is the common kernel of the $1$-forms $i_{\xi_1}\Omega_2$ and $i_{\xi_2}\Omega_1$.
\begin{proposition}\label{common-kernel}
Let  $\left\{(\eta_{1},\Omega_{1}),(\eta_{2},\Omega_{2})\right\}$ be an almost cosymplectic circle on a 3-dimensional manifold $M^3$. Then the Reeb distribution $\mathcal{V}=  \langle \xi_1, \xi_2\rangle$ is given by
\[
\mathcal{V}=\ker (i_{\xi_1}\Omega_2)=\ker (i_{\xi_2}\Omega_1).
\]
\end{proposition}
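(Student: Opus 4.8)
The plan is to reduce the claimed equalities to a comparison of ranks, exploiting that the ambient manifold is $3$-dimensional. By Corollary~\ref{independence-reeb} the Reeb vector fields $\xi_1,\xi_2$ are everywhere linearly independent, so $\mathcal{V}=\langle\xi_1,\xi_2\rangle$ is a rank-$2$ distribution on $M^3$. By Proposition~\ref{novanish} the $1$-forms $i_{\xi_1}\Omega_2$ and $i_{\xi_2}\Omega_1$ are nowhere vanishing, hence $\ker(i_{\xi_1}\Omega_2)$ and $\ker(i_{\xi_2}\Omega_1)$ are rank-$2$ distributions as well. Thus it is enough to establish the inclusions $\mathcal{V}\subseteq\ker(i_{\xi_1}\Omega_2)$ and $\mathcal{V}\subseteq\ker(i_{\xi_2}\Omega_1)$, after which equality follows from equality of ranks.

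To check $\mathcal{V}\subseteq\ker(i_{\xi_1}\Omega_2)$ it suffices to see that $i_{\xi_1}\Omega_2$ annihilates each generator of $\mathcal{V}$. One has $(i_{\xi_1}\Omega_2)(\xi_1)=\Omega_2(\xi_1,\xi_1)=0$ by antisymmetry of $\Omega_2$, and $(i_{\xi_1}\Omega_2)(\xi_2)=\Omega_2(\xi_1,\xi_2)=-\Omega_2(\xi_2,\xi_1)=-(i_{\xi_2}\Omega_2)(\xi_1)=0$, using the Reeb identity $i_{\xi_2}\Omega_2=0$ from \eqref{reeb}. Interchanging the indices $1$ and $2$ and using $i_{\xi_1}\Omega_1=0$ gives $\mathcal{V}\subseteq\ker(i_{\xi_2}\Omega_1)$ in exactly the same way, and the two rank counts above then upgrade both inclusions to equalities.

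I do not expect any genuine obstacle here: the computation is a two-line consequence of the antisymmetry of $\Omega_1,\Omega_2$ together with the two defining Reeb identities $i_{\xi_i}\Omega_i=0$. The only substantive ingredients are imported wholesale from the preceding results — the rank of $\mathcal{V}$ from Corollary~\ref{independence-reeb} and the non-vanishing of the contracted forms from Proposition~\ref{novanish} — the latter being precisely what makes the reverse inclusions, and hence the stated equalities (rather than mere containments), valid. I would therefore present the argument in the order: rank count via the two cited results, then the annihilation computation, then conclude by matching dimensions.
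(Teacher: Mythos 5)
Your argument is correct and is essentially the paper's own proof: both use Proposition~\ref{novanish} to get that the two kernels have rank two, Corollary~\ref{independence-reeb} to get that $\mathcal{V}$ has rank two, the antisymmetry of $\Omega_1,\Omega_2$ together with the Reeb identities to obtain the inclusions, and a dimension count to conclude equality. You merely spell out the "easily checked" inclusions in slightly more detail.
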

\begin{proof}
By Proposition \ref{novanish} the 1-forms $i_{\xi_1}\Omega_2$ and $i_{\xi_2}\Omega_1$ nowhere vanish, thus their kernels have dimension two. Moreover, one can easily check that $\xi_1, \xi_2 \in\ker (i_{\xi_1}\Omega_2)$ and $\xi_1, \xi_2 \in\ker (i_{\xi_2}\Omega_1)$. Since the two Reeb vector fields $\xi_1$, $\xi_{2}$ are linearly independent by Corollary~\ref{independence-reeb}, then $\mathcal{V}$ coincides with $\ker (i_{\xi_1}\Omega_2)$ and $\ker (i_{\xi_2}\Omega_1)$ for dimensional reasons.
\end{proof}

Recall that the kernel of a $1$-form $\theta$ is integrable if and only if $\theta\wedge\d\theta=0$.  Therefore we obtain  the following corollary.
\begin{corollary}\label{integrable-reeb}
Let  $\left\{(\eta_{1},\Omega_{1}),(\eta_{2},\Omega_{2})\right\}$ be an almost cosymplectic circle on a 3-dimensional manifold $M^3$. Then the Reeb vector fields $\xi_1$ and $\xi_2$  generate a two dimensional integrable distribution if and only if
\begin{equation*}
i_{\xi_1}\Omega_2 \wedge \d i_{\xi_1}\Omega_2=0,
\end{equation*}
or equivalently if and only if
\begin{equation*}
i_{\xi_2}\Omega_1 \wedge \d i_{\xi_2}\Omega_1=0.
\end{equation*}
\end{corollary}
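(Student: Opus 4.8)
The plan is to reduce everything to the already-established Proposition~\ref{common-kernel}. By that proposition, on a $3$-manifold the Reeb distribution $\mathcal{V}=\langle\xi_1,\xi_2\rangle$ equals $\ker(i_{\xi_1}\Omega_2)$, and it also equals $\ker(i_{\xi_2}\Omega_1)$. Both $i_{\xi_1}\Omega_2$ and $i_{\xi_2}\Omega_1$ are nowhere-vanishing $1$-forms by Proposition~\ref{novanish}, so each defines a codimension-one distribution, namely $\mathcal{V}$ itself. Thus ``$\xi_1,\xi_2$ generate a two-dimensional integrable distribution'' means precisely ``$\mathcal{V}$ is integrable,'' and it suffices to translate the Frobenius condition for each of these two defining $1$-forms.

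The key step is the classical Frobenius criterion for a hyperplane distribution cut out by a single $1$-form $\theta$: $\ker\theta$ is integrable if and only if $\theta\wedge\d\theta=0$. I would recall this (it is stated in the line preceding the corollary) and apply it with $\theta=i_{\xi_1}\Omega_2$ to obtain the first displayed equation, and with $\theta=i_{\xi_2}\Omega_1$ to obtain the second. Since by Proposition~\ref{common-kernel} these two $1$-forms have the same kernel $\mathcal{V}$, the two conditions are automatically equivalent to each other — each is equivalent to the integrability of the one distribution $\mathcal{V}$ — which gives the ``equivalently'' claim with no extra computation.

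There is essentially no obstacle here; the only point requiring a word of care is that the Frobenius test $\theta\wedge\d\theta=0$ is the correct integrability criterion only when $\theta$ is nowhere zero (otherwise $\ker\theta$ is not even a distribution), and this is guaranteed by Proposition~\ref{novanish}. I would therefore structure the proof as: (i) invoke Propositions~\ref{novanish} and~\ref{common-kernel} to identify $\mathcal{V}$ with $\ker(i_{\xi_1}\Omega_2)=\ker(i_{\xi_2}\Omega_1)$, both genuine codimension-one distributions; (ii) apply the single-$1$-form Frobenius criterion to each generator; (iii) note that the equivalence of the two conditions is immediate since they both express integrability of the same distribution. This is a short deduction, so the ``hard part'' is merely making sure the hypotheses of the Frobenius criterion are visibly in place.

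\begin{proof}
By Proposition~\ref{novanish}, the $1$-forms $i_{\xi_1}\Omega_2$ and $i_{\xi_2}\Omega_1$ are nowhere vanishing, so each of them cuts out a codimension-one distribution on $M^3$. By Proposition~\ref{common-kernel}, both of these distributions coincide with the Reeb distribution $\mathcal{V}=\langle\xi_1,\xi_2\rangle$. Hence $\xi_1$ and $\xi_2$ generate a two-dimensional integrable distribution if and only if $\ker(i_{\xi_1}\Omega_2)$ is integrable, which, by the Frobenius criterion recalled above applied to the nowhere-vanishing $1$-form $\theta=i_{\xi_1}\Omega_2$, holds if and only if
\begin{equation*}
i_{\xi_1}\Omega_2 \wedge \d i_{\xi_1}\Omega_2=0.
\end{equation*}
The same argument applied to $\theta=i_{\xi_2}\Omega_1$ shows that $\mathcal{V}$ is integrable if and only if
\begin{equation*}
i_{\xi_2}\Omega_1 \wedge \d i_{\xi_2}\Omega_1=0.
\end{equation*}
Since, again by Proposition~\ref{common-kernel}, $\ker(i_{\xi_1}\Omega_2)=\ker(i_{\xi_2}\Omega_1)=\mathcal{V}$, the two displayed conditions both express the integrability of the single distribution $\mathcal{V}$, and are therefore equivalent.
\end{proof}
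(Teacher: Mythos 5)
Your proof is correct and follows exactly the route the paper intends: the corollary is deduced from Proposition~\ref{common-kernel} together with the Frobenius criterion $\theta\wedge\d\theta=0$ for the kernel of a nowhere-vanishing $1$-form, which is precisely the remark the paper makes just before stating the result. Your added care about non-vanishing (via Proposition~\ref{novanish}) is a sensible explicit check but introduces nothing different in substance.
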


\begin{remark}
An example of a taut cosymplectic circle on a 3-dimensional manifold with non-integrable Reeb distribution is given by Example \ref{heisenberg} in the case $\gamma\neq 0$ where the condition $i_{\xi_1}\Omega_2 \wedge \d i_{\xi_1} \Omega_2=0$ of Corollary \ref{integrable-reeb} is not satisfied. In that example we have
$\xi_1=e_1$, $\xi_2=e_2$ and
\[
i_{\xi_1}\Omega_2= i_{\xi_1}(\eta_{3} \wedge \eta_{1})= -  \eta_{3}.
\]
Hence
\[
i_{\xi_1}\Omega_2 \wedge \d i_{\xi_1} \Omega_2=\eta_{3}\wedge \d \eta_{3}= -\frac{\gamma}{2} \eta_{1}\wedge \eta_{2}\wedge \eta_{3}\neq 0.
\]
The distribution generated by $\xi_1$ and $\xi_2$ is indeed not integrable since
\(
[\xi_1,\xi_2]=e_3.
\)
\end{remark}

Concerning the topological properties of a cosymplectic $p$-sphere on a compact manifold we have the following result.

\begin{proposition}\label{betti}
Let $\left\{(\eta_{1},\Omega_{1}),\ldots,(\eta_{p+1},\Omega_{p+1})\right\}$ be a cosymplectic $p$-sphere on a compact manifold $M$. Then the classes
\begin{enumerate}
\item[(i)] $[\eta_{1}],\ldots, [\eta_{p+1}]\in H^1(M)$ and
\item[(ii)] $[\Omega_{1}],\ldots, [\Omega_{p+1}]\in H^2(M)$
\end{enumerate}
are nonzero and linearly independent.
Therefore the following conditions on the Betti numbers are fulfilled: $b_1\geq p+1$, $b_2\geq p+1$.
\end{proposition}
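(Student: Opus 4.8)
The plan is to prove the linear independence of the cohomology classes directly, exploiting the fact that the whole $p$-sphere of structures is cosymplectic, so every linear combination with coefficients on $\mathbb{S}^p$ (and hence, by homogeneity, every linear combination at all) gives a closed pair whose top product is a volume form. I would treat (i) and (ii) in parallel, since the arguments are structurally identical: a nontrivial linear dependence among the $[\eta_i]$ would force a dependence relation that is incompatible with the volume condition.

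First I would argue (ii). Suppose $\sum_{i=1}^{p+1} c_i [\Omega_i] = 0$ in $H^2(M)$ with $(c_1,\dots,c_{p+1}) \neq 0$; after rescaling we may assume $\lambda := (c_1,\dots,c_{p+1}) \in \mathbb{S}^p$. Then $\Omega_\lambda = \sum_i c_i \Omega_i$ is an exact closed $2$-form, say $\Omega_\lambda = d\beta$. Since $(\eta_\lambda, \Omega_\lambda)$ is a cosymplectic structure, $\eta_\lambda \wedge \Omega_\lambda^n$ is a volume form; but $\eta_\lambda \wedge \Omega_\lambda^n = \eta_\lambda \wedge \Omega_\lambda^{n-1} \wedge d\beta$, and using $d\eta_\lambda = 0$ and $d\Omega_\lambda = 0$ this equals $\pm\, d(\eta_\lambda \wedge \Omega_\lambda^{n-1} \wedge \beta)$, i.e. it is an exact $(2n+1)$-form on the compact manifold $M$. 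Integrating over $M$ (which we may orient by this very volume form) gives, by Stokes, $0 = \int_M \eta_\lambda \wedge \Omega_\lambda^n \neq 0$, a contradiction. Hence no such relation exists, so $[\Omega_1], \dots, [\Omega_{p+1}]$ are linearly independent; in particular each is nonzero (take a relation with a single nonzero coefficient).

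For (i) the argument is the same with a small twist to keep a volume form in sight. Suppose $\sum_i c_i [\eta_i] = 0$ with $\lambda := (c_1,\dots,c_{p+1}) \in \mathbb{S}^p$, so $\eta_\lambda = df$ for some $f \in C^\infty(M)$. Then $\eta_\lambda \wedge \Omega_\lambda^n = df \wedge \Omega_\lambda^n = \pm\, d(f\, \Omega_\lambda^n)$ because $\Omega_\lambda$ is closed, so again $\eta_\lambda \wedge \Omega_\lambda^n$ is exact, and integrating over compact $M$ contradicts its being a volume form. Thus $[\eta_1], \dots, [\eta_{p+1}]$ are linearly independent and nonzero. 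Finally, linear independence of $p+1$ classes in $H^1(M)$ forces $b_1 = \dim H^1(M;\mathbb{R}) \geq p+1$, and likewise $b_2 \geq p+1$, which is the asserted statement on Betti numbers.

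The only point requiring any care — and it is the crux of the argument — is the observation that closedness of both $\eta_\lambda$ and $\Omega_\lambda$ turns $\eta_\lambda \wedge \Omega_\lambda^n$ into an exact form as soon as either factor is exact; this is exactly where the cosymplectic (as opposed to merely almost cosymplectic) hypothesis is used, and it is also why the analogous statement fails for contact spheres. Everything else is Stokes' theorem on a compact manifold together with the defining nondegeneracy $\eta_\lambda \wedge \Omega_\lambda^n \neq 0$.
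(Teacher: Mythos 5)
Your proof is correct and follows essentially the same route as the paper: assume a nontrivial linear relation, rescale the coefficients onto $\mathbb{S}^p$, use closedness of $\eta_\lambda$ and $\Omega_\lambda$ to exhibit the volume form $\eta_\lambda\wedge\Omega_\lambda^n$ as an exact form, and derive a contradiction on the compact manifold $M$. The only cosmetic difference is that you phrase the contradiction via Stokes' theorem and integration, while the paper phrases it as the vanishing of the class in $H^{2n+1}(M)$ (and leaves the case of the $[\Omega_i]$ as ``similarly''), which is the same argument.
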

\begin{proof}
Suppose that
\[
\sum_{i=1}^{p+1}\lambda_i [\eta_{i}]=0 \mbox{ in $H^1(M)$.}
\]
Upon dividing by $\sum_{i=1}^{p+1}\lambda_i^2>0$, we can assume that $(\lambda_{1},\ldots,\lambda_{p+1})\in\bS^{p}$.
Then, there is a function $f$ such that
\[
\d f= \sum_{i=1}^{p+1}\lambda_i \eta_{i}.
\]
On the other hand, we know that $(\sum_{i=1}^{p+1}\lambda_i \eta_{i}, \sum_{i=1}^{p+1}\lambda_i \Omega_{i})$ is a cosymplectic structure on $M$.
Thus,
\begin{align*}
\d \left(f \wedge (\sum_{i=1}^{p+1}\lambda_i \Omega_{i})^n\right)=  \d f \wedge \left(\sum_{i=1}^{p+1} \lambda_i \Omega_{i}\right)^n=
\left(\sum_{i=1}^{p+1}\lambda_i \eta_{i}\right)\wedge \left(\sum_{i=1}^{p+1} \lambda_i \Omega_{i}\right)^n
\end{align*}
should have a nontrivial  class in $H^{2n+1}(M)$. We get a contradiction, hence \allowbreak $[\eta_{1}],\ldots, [\eta_{p+1}]\in H^1(M)$ are nonzero and linearly independent. Similarly one shows that $[\Omega_{1}],\ldots, [\Omega_{p+1}]\in H^2(M)$ are nonzero and linearly independent.
\end{proof}

\bigskip

Now we give a full classification of $3$-dimensional compact manifolds admitting a cosymplectic circle.

\begin{theorem}
Let $M$ be a compact $3$-dimensional manifold. $M$ admits a cosymplectic circle if and only if $M$ is either a $3$-torus or a quotient of the Heisenberg group by a co-compact subgroup.
\end{theorem}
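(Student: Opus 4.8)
The plan is to prove the two implications separately, the harder one being necessity. For sufficiency, I would invoke Example~\ref{3manifold}: on the $3$-torus $\T^3$ the closed $1$-forms $\d\theta_1,\d\theta_2$ (together with $\Omega_1=\d\theta_2\wedge\d\theta_3$, $\Omega_2=\d\theta_3\wedge\d\theta_1$) generate a cosymplectic circle, and likewise on any compact quotient $H/\Gamma$ of the Heisenberg group one uses the left-invariant forms $\eta_1,\eta_2$ (which are closed, since only $\eta_3$ has nonzero differential) together with $\Omega_1=\eta_2\wedge\eta_3$, $\Omega_2=\eta_3\wedge\eta_1$; here $\d\Omega_1=\eta_2\wedge\d\eta_3=0$ and $\d\Omega_2=-\d\eta_3\wedge\eta_1=0$, so $(\eta_1,\Omega_1)$ and $(\eta_2,\Omega_2)$ are genuine cosymplectic structures on $H/\Gamma$ and Example~\ref{heisenberg} shows they generate a cosymplectic circle. (Strictly I must also note that both $\T^3$ and $H/\Gamma$ are orientable and parallelizable, so the nondegeneracy computation of Example~\ref{3manifold} applies verbatim.)

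For necessity, suppose $M^3$ compact carries a cosymplectic circle $\{(\eta_1,\Omega_1),(\eta_2,\Omega_2)\}$. By Proposition~\ref{betti} we get $b_1(M)\geq 2$ and $b_2(M)\geq 2$; since $M$ is a closed orientable $3$-manifold, Poincar\'e duality gives $b_2=b_1$, so $b_1(M)\geq 2$, and in fact $b_1\in\{2,3\}$ because a closed orientable $3$-manifold has $b_1\leq 3$. The idea is then to show that the existence of the circle forces the cosymplectic (hence harmonic, Tischler-type) structure to rigidify $M$ into an aspherical Seifert-type manifold, and to appeal to the classification of closed $3$-manifolds with $b_1\geq 2$ that admit such structures. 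Concretely: each $\eta_i$ closed and nowhere zero (it is the Reeb contraction partner in a cosymplectic triple, so $\eta_i\wedge\Omega_i\neq 0$ forces $\eta_i\neq 0$ everywhere), and by Corollary~\ref{independence-reeb} the Reeb fields $\xi_1,\xi_2$ are everywhere independent, so $M$ is parallelized by $\xi_1,\xi_2$ and a third field; in particular $\chi(M)=0$ consistent with a closed $3$-manifold, and $M$ fibers (Tischler) over $S^1$ in at least two "independent" ways via the $\eta_i$. The core of the argument is to analyze the two-dimensional integrable or non-integrable Reeb distribution $\mathcal V=\langle\xi_1,\xi_2\rangle$ from Proposition~\ref{common-kernel} and Corollary~\ref{integrable-reeb}: in the integrable case the leaves give a codimension-one foliation by surfaces, and combined with $b_1\geq 2$ one forces $M=\T^3$; in the non-integrable case $[\xi_1,\xi_2]$ together with $\xi_1,\xi_2$ spans $TM$ with structure constants of Heisenberg type (as in the Remark following Corollary~\ref{integrable-reeb}), and one shows the closed forms force a flat affine/nilpotent structure whose developing map exhibits $M$ as $H/\Gamma$.

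I expect the main obstacle to be making the "rigidification" rigorous rather than invocational: passing from the pointwise data ($\xi_1,\xi_2$ independent, $\eta_i$ closed, the sphere condition) to a genuine $(G,X)$-structure with $G$ the isometry group of $\R^3$ with a flat connection and $X$ either $\R^3$ or the Heisenberg group. The cleanest route is probably to reduce to the structure equations: after using the circle condition to normalize, show that $M$ admits a flat torsion-free affine connection for which $\xi_1,\xi_2$ and the dual frame are parallel, deduce that the holonomy is trivial or unipotent, and then quote the classification of compact flat/nil affine $3$-manifolds (Bieberbach/Auslander) to conclude $M\cong\T^3$ or $M\cong H/\Gamma$. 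One must also rule out the remaining Thurston geometries with $b_1\geq 2$ (e.g. $\Sigma\times S^1$ for higher-genus $\Sigma$, or $\mathrm{Sol}$-manifolds): for $\Sigma_g\times S^1$ with $g\geq 2$ one shows a cosymplectic circle would descend to a "symplectic circle" on $\Sigma_g$, i.e. a $2$-sphere of area forms $\lambda_1\Omega_1+\lambda_2\Omega_2$ all nondegenerate, which is impossible since $\Omega_1\wedge\Omega_2$-type obstructions (equivalently, $[\Omega_1],[\Omega_2]$ would both be nonzero multiples of the area class, contradicting linear independence in $H^2(\Sigma_g)$ pulled back appropriately); for $\mathrm{Sol}$ and other cases the rank of $H^1$ or the structure of the monodromy gives the contradiction. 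The write-up will need to handle these exclusions carefully, but each is a short cohomological or fundamental-group computation.
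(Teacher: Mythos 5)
Your sufficiency direction is fine (it is exactly the paper's: Examples~\ref{3manifold} and \ref{heisenberg}). The necessity direction, however, is a sketch of intentions rather than a proof, and it contains both a missing central step and some outright incorrect claims. The missing step is the passage from the analytic data (two closed, pointwise linearly independent $1$-forms $\eta_1,\eta_2$ with the circle condition) to the topological classification: you never actually produce the ``rigid'' structure you invoke. There is no justification for the existence of a flat torsion-free connection making $\xi_1,\xi_2$ and a dual coframe parallel --- the circle condition controls $\eta_1,\eta_2,\Omega_1,\Omega_2$ cohomologically and pointwise, but says nothing about covariant constancy or about $[\xi_1,\xi_2]$ (tautness is not assumed here), so the appeal to Bieberbach/Auslander has nothing to stand on. Likewise the dichotomy ``Reeb distribution integrable $\Rightarrow \T^3$, non-integrable $\Rightarrow H/\Gamma$'' is asserted, not proved, and is not even obviously true as stated (nothing prevents a cosymplectic circle on $\T^3$ with non-integrable $\mathcal V$). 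Two auxiliary claims are also wrong or unjustified: a closed orientable $3$-manifold does \emph{not} satisfy $b_1\leq 3$ (connected sums of $S^1\times S^2$ give arbitrarily large $b_1$), and there is no reason a cosymplectic circle on $\Sigma_g\times S^1$ ``descends'' to a circle of area forms on $\Sigma_g$ (the structures need not be products), so your exclusion of the remaining geometries is not established either; note also that Proposition~\ref{betti} alone does not exclude $\Sigma_g\times S^1$, since its Betti numbers are large.

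For comparison, the paper's argument avoids all of this geometry-by-geometry analysis: the circle condition forces $\eta_1,\eta_2$ to be pointwise linearly independent closed forms, so Tischler's theorem (the version for two independent closed $1$-forms) exhibits $M$ as a locally trivial fibration over $\T^2$ with $1$-dimensional compact fiber; a Stein-factorization argument reduces to a connected fiber, so $M$ is an $\bS^1$-bundle over $\T^2$, orientability makes it a principal bundle, and Palais--Stewart identifies such a bundle with a $2$-step nilmanifold $G/\Gamma$, whence $M\cong\T^3$ or $M\cong H/\Gamma$. If you want to salvage your write-up, the fibration-over-$\T^2$ step is the key idea you are missing; the case analysis over Thurston geometries and the flat-affine rigidification should be dropped.
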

\begin{proof}
Assume that $M$ carries a cosymplectic circle ${(\eta_1,\Omega_1), (\eta_2,\Omega_2)}$. Since $\eta_1, \eta_2$ are closed and linearly independent, by Tischler's theorem
\cite[Corollary 2]{tischler}   $M$ is a locally trivial fibration  over the $2$-torus
$$F\longrightarrow M \longrightarrow \mathbb{T}^2.$$
The $1$-dimensional fiber $F$ is closed, hence compact. Thus it is a finite disjoint union of circles. By eventually passing  to a finite cover of the base, we can assume that $F \cong \mathbb{S}^1$. Indeed, one can show that every fibration of a connected space over a connected base can be written as a fibration with a connected fiber followed by a covering (cf. Stein factorization in algebraic geometry). Thus we have
$$M\longrightarrow B \longrightarrow \mathbb{T}^2,$$
where $M\to B$ is an $\bS^1$-fiber bundle and $B$ is a finite covering of $\mathbb{T}^2$, and hence  $B\cong \mathbb{T}^2$. Therefore $M$ is an $\bS^1$-bundle over $\mathbb{T}^2$.
Moreover, this $\bS^1$-bundle over $\mathbb{T}^2$ is oriented since $M$ is oriented.
Now, every oriented $\bS^1$-bundle is principal by \cite[Proposition 6.15]{morita}. Hence $M$ is a principal circle bundle over $\mathbb{T}^2$ and by \cite[Theorem 3]{palais} we get that $M$ is diffeomorphic to a $2$-step nilpotent nilmanifold $G/\Gamma$. However, every $3$-dimensional $2$-step nilpotent Lie group is either the Heisenberg group or the torus~$\mathbb{T}^3$.

Conversely, Example \ref{heisenberg} and Example \ref{3manifold} show that both $H/ \Gamma$ and $\mathbb{T}^3$ admit a cosymplectic circle.
\end{proof}

\section{Taut and round cosymplectic $p$-spheres}

In this section we focus on round and taut cosymplectic $p$-spheres. We start with a characterization of roundness.

\begin{proposition}\label{round1}
Let $\left\{(\eta_{1},\Omega_{1}),\ldots,(\eta_{p+1},\Omega_{p+1})\right\}$ be a cosymplectic $p$-sphere on a manifold $M$ and let $\xi_1, \ldots,\xi_{p+1}$ denote
the  Reeb vector fields of the generators. Then $\left\{(\eta_{1},\Omega_{1}),\ldots,(\eta_{p+1},\Omega_{p+1})\right\}$ is round if and only if the
following conditions are fulfilled
\begin{enumerate}
\item[(i)] $\eta_{i}(\xi_{j})+\eta_{j}(\xi_{i}) = 0$ \ for any $i,j\in\left\{1,\ldots,p+1\right\}$, $i\neq j$
\item[(ii)] $i_{\xi_{i}}\Omega_{j}+i_{\xi_{j}}\Omega_{i}=0$ \ for any $i,j\in\left\{1,\ldots,p+1\right\}$.
\end{enumerate}
\end{proposition}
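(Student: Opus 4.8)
The plan is to unwind the definition of roundness at a fixed point and fixed $\lambda\in\bS^{p}$, and show that the two defining equations \eqref{reeb} for the candidate Reeb vector field $\xi_\lambda := \sum_i \lambda_i\xi_i$ split, upon using $\sum_i\lambda_i^2=1$, into a ``diagonal'' part that is automatically satisfied and an ``off-diagonal'' part that is equivalent to (i) and (ii). Concretely, roundness means precisely that for every $\lambda\in\bS^p$ one has $i_{\xi_\lambda}\eta_\lambda = 1$ and $i_{\xi_\lambda}\Omega_\lambda = 0$, and the whole content of the proposition is to expand these two conditions bilinearly.

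First I would compute
\[
i_{\xi_\lambda}\eta_\lambda = \sum_{i,j}\lambda_i\lambda_j\,\eta_j(\xi_i) = \sum_i \lambda_i^2\,\eta_i(\xi_i) + \sum_{i<j}\lambda_i\lambda_j\bigl(\eta_i(\xi_j)+\eta_j(\xi_i)\bigr).
\]
Since each $(\eta_i,\Omega_i)$ is an almost cosymplectic structure we have $\eta_i(\xi_i)=1$ by \eqref{reeb}, so the first sum equals $\sum_i\lambda_i^2 = 1$. Hence $i_{\xi_\lambda}\eta_\lambda = 1$ for all $\lambda\in\bS^p$ if and only if the $\bS^p$-bundle of quadratic forms $\sum_{i<j}\lambda_i\lambda_j\bigl(\eta_i(\xi_j)+\eta_j(\xi_i)\bigr)$ vanishes identically. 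Likewise,
\[
i_{\xi_\lambda}\Omega_\lambda = \sum_{i,j}\lambda_i\lambda_j\,i_{\xi_i}\Omega_j = \sum_i\lambda_i^2\,i_{\xi_i}\Omega_i + \sum_{i<j}\lambda_i\lambda_j\bigl(i_{\xi_i}\Omega_j + i_{\xi_j}\Omega_i\bigr),
\]
and $i_{\xi_i}\Omega_i = 0$ again by \eqref{reeb}, so this reduces to the quadratic expression $\sum_{i<j}\lambda_i\lambda_j\bigl(i_{\xi_i}\Omega_j+i_{\xi_j}\Omega_i\bigr)$.

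It remains to argue that a homogeneous quadratic expression $\sum_{i<j}\lambda_i\lambda_j\, c_{ij}$ (with coefficients $c_{ij}$ being smooth functions, resp. $1$-forms, on $M$) vanishes for all $\lambda\in\bS^p$ if and only if every $c_{ij}$ vanishes. The nontrivial direction is clear: evaluate at $\lambda = \tfrac{1}{\sqrt2}(e_i + e_j)$ to read off $c_{ij} = 0$, where $e_1,\dots,e_{p+1}$ is the standard basis of $\Rr^{p+1}$. Applying this with $c_{ij} = \eta_i(\xi_j)+\eta_j(\xi_i)$ gives (i), and with $c_{ij} = i_{\xi_i}\Omega_j + i_{\xi_j}\Omega_i$ gives (ii); conversely (i) and (ii) force both quadratic expressions to vanish. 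Note that in (i) the condition is stated only for $i\neq j$ because the $i=j$ terms are handled by the normalization $\eta_i(\xi_i)=1$, whereas in (ii) it is harmless to include $i=j$ since $i_{\xi_i}\Omega_i=0$ anyway. There is essentially no obstacle here — the only mild point to state carefully is that $\xi_\lambda$ being \emph{the} Reeb vector field of $(\eta_\lambda,\Omega_\lambda)$ is equivalent to the two conditions in \eqref{reeb}, by uniqueness of the Reeb vector field, so that roundness is captured exactly by the two bilinear identities above.
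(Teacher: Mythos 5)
Your proof is correct and follows essentially the same route as the paper: unwind roundness as $\eta_\lambda(\xi_\lambda)=1$, $i_{\xi_\lambda}\Omega_\lambda=0$, expand bilinearly using $\eta_i(\xi_i)=1$, $i_{\xi_i}\Omega_i=0$ and $\sum_i\lambda_i^2=1$, and then specialize to $\lambda_i=\lambda_j=1/\sqrt2$ with the other components zero to extract (i) and (ii). Your explicit remark on the trivial converse direction is a minor addition the paper leaves implicit.
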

\begin{proof}
The cosymplectic sphere $\left\{(\eta_{1},\Omega_{1}),\ldots,(\eta_{p+1},\Omega_{p+1})\right\}$  is round if and only if the vector field
$\xi_{\lambda}:=\lambda_{1}\xi_{1}+\ldots+\lambda_{p+1}\xi_{p+1}$ is the Reeb vector field of $(\eta_{\lambda},\Omega_{\lambda})$, i.e. $\eta_{\lambda}(\xi_\lambda)=1$ and $i_{\xi_\lambda}\Omega_\lambda=0$. These conditions give
\begin{align*}
\sum_{i=1}^{p+1}\lambda_{i}^{2} + \sum_{i\neq j}\lambda_{i}\lambda_{j}\eta_{i}(\xi_j)=1,\qquad \sum_{i\neq j} \lambda_{i}\lambda_{j}i_{\xi_i}\Omega_j=0.
\end{align*}
Substituting $\lambda_{k}=0$ for $k\neq i, j$  and $\lambda_{i}=\lambda_{j}=1/\sqrt{2}$, we get
\begin{equation*}
\frac12\left(\eta_{i}(\xi_j)+\eta_{j}(\xi_i)\right)=0,\qquad  \frac12(i_{\xi_i}\Omega_j+i_{\xi_j}\Omega_i)=0.
\end{equation*}
\end{proof}

We now prove that in dimension 3 tautness and roundness are equivalent. We start with the following lemma, which is a characterization of tautness in
dimension $3$.

\begin{lemma}\label{tautness}
A cosymplectic circle $\left\{(\eta_{1},\Omega_{1}),(\eta_{2},\Omega_{2})\right\}$  on a $3$-manifold $M$ is taut if and only if the
following conditions are fulfilled
\begin{gather}
\eta_{1}\wedge\Omega_{1}=\eta_{2}\wedge\Omega_{2} \label{taut1}\\
\eta_{1}\wedge\Omega_{2}=-\eta_{2}\wedge\Omega_{1} \label{taut2}.
\end{gather}
\end{lemma}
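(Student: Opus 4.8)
The plan is to exploit the fact that a cosymplectic circle is a \emph{circle}, so tautness is equivalent to the polynomial identity obtained by expanding $\eta_\lambda \wedge \Omega_\lambda$ in the variables $\lambda_1, \lambda_2$ and comparing it to the constant value it must take on $\bS^1$. First I would write, for $(\lambda_1,\lambda_2)\in\bS^1$,
\begin{equation*}
\eta_\lambda\wedge\Omega_\lambda = \lambda_1^2\,\eta_1\wedge\Omega_1 + \lambda_1\lambda_2\left(\eta_1\wedge\Omega_2 + \eta_2\wedge\Omega_1\right) + \lambda_2^2\,\eta_2\wedge\Omega_2 .
\end{equation*}
This is a $3$-form on the $3$-manifold $M$, so at each point it is a multiple of a fixed volume form; writing the three coefficient $3$-forms as scalar functions $a = \eta_1\wedge\Omega_1$, $b = \eta_1\wedge\Omega_2 + \eta_2\wedge\Omega_1$, $c = \eta_2\wedge\Omega_2$ against that volume form, tautness says the function $\lambda_1^2 a + \lambda_1\lambda_2 b + \lambda_2^2 c$ is constant on the circle $\lambda_1^2+\lambda_2^2=1$.

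The key step is the elementary observation that a quadratic form $a\lambda_1^2 + b\lambda_1\lambda_2 + c\lambda_2^2$ is constant on the unit circle if and only if $a = c$ and $b = 0$: sufficiency is clear since then it equals $a(\lambda_1^2+\lambda_2^2) = a$; for necessity, evaluate at $(1,0)$, $(0,1)$ to get $a=c$, and at $(\tfrac1{\sqrt2},\tfrac1{\sqrt2})$ and $(\tfrac1{\sqrt2},-\tfrac1{\sqrt2})$ to get $\tfrac12 a + \tfrac12 b + \tfrac12 c = \tfrac12 a - \tfrac12 b + \tfrac12 c$, hence $b=0$. Applying this pointwise (the scalar functions $a,b,c$ vary over $M$, but the argument is pointwise) gives that tautness is equivalent to $\eta_1\wedge\Omega_1 = \eta_2\wedge\Omega_2$ and $\eta_1\wedge\Omega_2 + \eta_2\wedge\Omega_1 = 0$, which are exactly \eqref{taut1} and \eqref{taut2}. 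Note that $\eta_1\wedge\Omega_1$ is nowhere zero since $(\eta_1,\Omega_1)$ is almost cosymplectic, so the identification of $3$-forms with scalar multiples of a volume form causes no trouble.

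The one genuine subtlety — and the part I would be most careful about — is justifying that the constancy of $\eta_\lambda\wedge\Omega_\lambda$ as a $3$-form over all of $M$ reduces to the pointwise quadratic-form statement: one must fix a local volume form (or work in a local frame) so that the three $3$-forms $a, b, c$ above become honest real-valued functions, and observe that the ``constant on $\bS^1$'' condition in the definition of tautness means constant as a $3$-form, i.e. the equality holds of $3$-forms for all pairs $\lambda, \lambda'$, which is the same as each pointwise quadratic function being constant on $\bS^1$. Everything else is the two-line linear algebra above plus the expansion of $(\lambda_1\Omega_1+\lambda_2\Omega_2)^1$, which is trivial since $n=1$ here. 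I do not expect any real obstacle; the content is entirely in recognizing that ``constant on $\bS^1$'' for a quadratic form forces the diagonal coefficients to agree and the cross term to vanish.
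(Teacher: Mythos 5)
Your proof is correct and takes essentially the same route as the paper: tautness is tested by evaluating the quadratic expression $\lambda_1^2\,\eta_1\wedge\Omega_1+\lambda_1\lambda_2(\eta_1\wedge\Omega_2+\eta_2\wedge\Omega_1)+\lambda_2^2\,\eta_2\wedge\Omega_2$ at special points of $\bS^1$ (the paper uses $(1,0)$, $(0,1)$ and $\bigl(\tfrac1{\sqrt2},\tfrac1{\sqrt2}\bigr)$; you replace the last comparison by $\bigl(\tfrac1{\sqrt2},\pm\tfrac1{\sqrt2}\bigr)$, an immaterial variation), and the converse is the same computation using $\lambda_1^2+\lambda_2^2=1$. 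The pointwise identification of $3$-forms with scalar functions that you flag as a subtlety is harmless and is implicit in the paper's argument as well.
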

\begin{proof}
Let us assume that $\left\{(\eta_{1},\Omega_{1}),(\eta_{2},\Omega_{2})\right\}$  is taut, that is
\begin{equation*}
\left(\lambda_{1}\eta_{1}+\lambda_{2}\eta_{2}\right)\wedge\left(\lambda_{1}\Omega_{1}+\lambda_{2}\Omega_{2}\right)=\left(\lambda'_{1}\eta_{1}+\lambda'_{2}\eta_{2}\right)\wedge\left(\lambda'_{1}\Omega_{1}+\lambda'_{2}\Omega_{2}\right)
\end{equation*}
for any $(\lambda_{1},\lambda_{2}),(\lambda'_{1},\lambda'_{2})\in\mathbb{S}^1$. Then by taking $(\lambda_{1},\lambda_{2})=(1,0)$ and
$(\lambda'_{1},\lambda'_{2})=(0,1)$ we get \eqref{taut1}. Next, by taking $(\lambda_{1},\lambda_{2})=\left(\frac{1}{\sqrt{2}},\frac{1}{\sqrt{2}}\right)$
and $(\lambda'_{1},\lambda'_{2})=(1,0)$, and using \eqref{taut1}, we obtain \eqref{taut2}. \ Conversely, if \eqref{taut1}--\eqref{taut2} hold, then for any
$(\lambda_{1},\lambda_{2})\in\mathbb{S}^1$ we have
\begin{align*}
\left(\lambda_{1}\eta_{1}+\lambda_{2}\eta_{2}\right)\wedge\left(\lambda_{1}\Omega_{1}+\lambda_{2}\Omega_{2}\right)&=\lambda_{1}^{2}\eta_{1}\wedge\Omega_{1}
+\lambda_{1}\lambda_{2}\eta_{1}\wedge\Omega_{2}+\lambda_{2}\lambda_{1}\eta_{2}\wedge\Omega_{1}\\
&\quad+\lambda_{2}^{2}\eta_{2}\wedge\Omega_{2}\\
&=(\lambda_{1}^{2}+\lambda_{2}^{2})\eta_{1}\wedge\Omega_{1}+\lambda_{1}\lambda_{2}\eta_{1}\wedge\Omega_{2}+\lambda_{2}\lambda_{1}\eta_{2}\wedge\Omega_{1}\\
&=\eta_{1}\wedge\Omega_{1}.
\end{align*}
This proves that $\left\{(\eta_{1},\Omega_{1}),(\eta_{2},\Omega_{2})\right\}$ is taut.
\end{proof}

\begin{theorem}\label{roundtaut}
On a  $3$-manifold $M$ a cosymplectic $p$-sphere is taut if and only if it is round.
\end{theorem}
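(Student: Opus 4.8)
The plan is to prove the equivalence for a cosymplectic $p$-sphere by reducing to the case of a circle and then using the characterizations of tautness (Lemma~\ref{tautness}) and roundness (Proposition~\ref{round1}) in dimension three. A $p$-sphere is taut (resp.\ round) if and only if every circle it contains, generated by a pair $(\eta_i,\Omega_i),(\eta_j,\Omega_j)$, is taut (resp.\ round): tautness of the $p$-sphere is the statement that $\eta_\lambda\wedge\Omega_\lambda$ is independent of $\lambda\in\bS^p$, and restricting $\lambda$ to the great circles in the coordinate $2$-planes recovers tautness of each pair; conversely, tautness of all coordinate pairs forces $\eta_i\wedge\Omega_i=\eta_j\wedge\Omega_j$ and $\eta_i\wedge\Omega_j=-\eta_j\wedge\Omega_i$ for all $i,j$, and expanding $\eta_\lambda\wedge\Omega_\lambda$ then collapses it to $(\sum_i\lambda_i^2)\eta_1\wedge\Omega_1=\eta_1\wedge\Omega_1$. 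The same reduction works for roundness via Proposition~\ref{round1}, whose conditions (i)--(ii) are already phrased pairwise. So it suffices to prove the equivalence for a cosymplectic circle on $M^3$.

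First I would show \emph{round $\Rightarrow$ taut}. Suppose the circle is round, so by Proposition~\ref{round1} we have $\eta_1(\xi_2)+\eta_2(\xi_1)=0$ and $i_{\xi_1}\Omega_2+i_{\xi_2}\Omega_1=0$. I need to derive the two identities of Lemma~\ref{tautness}. The key point is that in dimension three a top form is detected by contracting with a frame, and a natural frame is built from the Reeb vector fields. By Corollary~\ref{independence-reeb}, $\xi_1,\xi_2$ are linearly independent; complete them to a local frame $\xi_1,\xi_2,Z$. To check $\eta_1\wedge\Omega_1=\eta_2\wedge\Omega_2$ and $\eta_1\wedge\Omega_2=-\eta_2\wedge\Omega_1$ it is enough to evaluate both sides on $(\xi_1,\xi_2,Z)$. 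Using $i_{\xi_i}\Omega_i=0$, $i_{\xi_i}\eta_i=1$, the roundness relations, and expanding the wedge products, the evaluations reduce to expressions in $\eta_i(\xi_j)$, $\eta_i(Z)$, $\Omega_i(\xi_j,Z)$; the roundness relations make the two sides agree. I would carry this out by writing, say, $(\eta_1\wedge\Omega_1)(\xi_1,\xi_2,Z)$ as a sum of $3!/(1!2!)$-type terms and matching.

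Next, \emph{taut $\Rightarrow$ round}. Assume \eqref{taut1}--\eqref{taut2}. Again evaluate on the frame $\xi_1,\xi_2,Z$. Feeding $\xi_1$ into \eqref{taut1}: since $i_{\xi_1}\Omega_1=0$ and $i_{\xi_1}\eta_1=1$, the left side gives $\Omega_1(\xi_1,\cdot)\wedge$-type contributions vanish and one is left with $i_{\xi_1}(\eta_1\wedge\Omega_1)=\Omega_1 - 0$, hmm more precisely $i_{\xi_1}(\eta_1\wedge\Omega_1)=(i_{\xi_1}\eta_1)\Omega_1-\eta_1\wedge i_{\xi_1}\Omega_1=\Omega_1$; contracting the right side $i_{\xi_1}(\eta_2\wedge\Omega_2)=\eta_2(\xi_1)\Omega_2-\eta_2\wedge i_{\xi_1}\Omega_2$. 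Similarly contract \eqref{taut2} with $\xi_1$ and with $\xi_2$. These $2$-form identities, together with a further contraction (or evaluation on $Z$) and the nonvanishing of $i_{\xi_i}\Omega_j$ from Proposition~\ref{novanish}, should yield exactly $\eta_1(\xi_2)+\eta_2(\xi_1)=0$ and $i_{\xi_1}\Omega_2+i_{\xi_2}\Omega_1=0$, i.e.\ roundness by Proposition~\ref{round1}. The main obstacle I anticipate is this last implication: extracting the \emph{pointwise} vector-field identities from the $2$-form identities requires using that the forms are nondegenerate in the right sense and that $\{\xi_1,\xi_2\}$ spans the Reeb distribution $\mathcal V=\ker(i_{\xi_1}\Omega_2)=\ker(i_{\xi_2}\Omega_1)$ (Proposition~\ref{common-kernel}); one must check the remaining ``$Z$-direction'' carefully rather than just the $\xi_i$-directions. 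I would organize the bookkeeping by working in the coframe dual to $\xi_1,\xi_2,Z$ and comparing coefficients.
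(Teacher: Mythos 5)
Your proposal is correct and follows essentially the same route as the paper: reduce to the circle case, prove round $\Rightarrow$ taut by evaluating the $3$-form identities on a local frame $(\xi_1,\xi_2,Z)$ built from the Reeb fields, and prove taut $\Rightarrow$ round by contracting \eqref{taut1}--\eqref{taut2} with $\xi_1,\xi_2$ and invoking the nonvanishing of $i_{\xi_1}\Omega_2$ from Proposition~\ref{novanish} to extract conditions (i)--(ii) of Proposition~\ref{round1}. The only difference is that you spell out the reduction from $p$-spheres to coordinate circles (valid here since $n=1$ makes $\eta_\lambda\wedge\Omega_\lambda$ quadratic in $\lambda$), where the paper simply remarks the sphere case is analogous.
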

\begin{proof}
We prove the statement of the theorem only for cosymplectic circles, since the case of cosymplectic spheres can be proved in a very similar way.
Let $\left\{(\eta_{1},\Omega_{1}),\right.$ $\left.(\eta_{2},\Omega_{2})\right\}$ be a taut cosymplectic circle, that is \eqref{taut1}--\eqref{taut2} hold. Let $\xi_1$ and
$\xi_2$ be the Reeb vector fields of $(\eta_{1},\Omega_{1})$ and $(\eta_{2},\Omega_{2})$, respectively.  From \eqref{taut1} and \eqref{reeb} it follows
that
\begin{equation}\label{condition1}
i_{\xi_2}\Omega_1=i_{\xi_2}i_{\xi_1}(\eta_{1}\wedge\Omega_1)=i_{\xi_2}i_{\xi_1}(\eta_{2}\wedge\Omega_2)=-i_{\xi_1}i_{\xi_2}(\eta_{2}\wedge\Omega_2)=-i_{\xi_1}\Omega_2.
\end{equation}
Moreover, by applying the interior product by $\xi_1$ to \eqref{taut2} we get
\begin{equation}\label{intermediate1}
\Omega_2 - \eta_{1}\wedge i_{\xi_1}\Omega_2 = -i_{\xi_1}\eta_2 \wedge\Omega_1.
\end{equation}
By applying the interior product by $\xi_2$ to both sides of \eqref{intermediate1} we get
\begin{equation}\label{intermediate2}
- i_{\xi_2}\eta_{1}\wedge i_{\xi_1}\Omega_2 = - i_{\xi_1}\eta_{2}\wedge i_{\xi_{2}}\Omega_{1}.
\end{equation}
Thus, by using \eqref{condition1} in the first side of \eqref{intermediate2} we find
\begin{equation*}
\left(i_{\xi_1}\Omega_{2}\right)\left(\eta_{1}(\xi_2)+\eta_{2}(\xi_1)\right)=0,
\end{equation*}
which, by Proposition \ref{novanish}, is equivalent to
\begin{equation}\label{condition2}
\eta_{1}(\xi_2)+\eta_{2}(\xi_1)=0.
\end{equation}
Notice that \eqref{condition2} and \eqref{condition1} are, respectively, the conditions (i) and (ii) in Proposition \ref{round1}. Therefore the
cosymplectic circle is round.

Conversely, let $\left\{(\eta_{1},\Omega_{1}),(\eta_{2},\Omega_{2})\right\}$  be a round cosymplectic circle.  Let $\xi_1$
and $\xi_2$ be the Reeb vector fields of the generators $(\eta_{1},\Omega_{1})$ and $(\eta_{2},\Omega_{2})$, respectively. By Proposition \ref{round1} we
have
\begin{gather}
\eta_{1}(\xi_2)=-\eta_{2}(\xi_1),\label{intermediate4}\\
i_{\xi_2}\Omega_1 =- i_{\xi_1}\Omega_2  \label{intermediate3}
\end{gather}
Moreover, by Corollary \ref{independence-reeb} one has that $\xi_1$ and $\xi_2$ are linearly independent. Thus we can complete them to  a local basis $(\xi_1,
\xi_2, Z)$ defined  on an open subset $U$ of $M$. Then by \eqref{intermediate3} we get
\begin{equation}\label{intermediate5}
\Omega_1(\xi_2,Z)=-\Omega_2(\xi_1,Z).
\end{equation}
Then, by using \eqref{intermediate4} and  \eqref{intermediate5} we have,  for any $\lambda=(\lambda_1,\lambda_2)\in\mathbb{S}^1$,
\begin{align*}
\eta_{\lambda}\wedge&\Omega_{\lambda}(\xi_1,\xi_2,Z)=(\lambda_1\eta_1+\lambda_2\eta_2)\wedge(\lambda_1\Omega_1+\lambda_2\Omega_2)(\xi_1,\xi_2,Z)\\
&=\lambda_1^2\, \eta_{1}\wedge\Omega_1 (\xi_1,\xi_2,Z) + \lambda_1\lambda_2 \,\eta_{1}\wedge\Omega_2 (\xi_1,\xi_2,Z) + \lambda_2\lambda_1\, \eta_{2}\wedge\Omega_1 (\xi_1,\xi_2,Z)
\\&\quad+\lambda_2^2 \,\eta_{2}\wedge\Omega_2 (\xi_1,\xi_2,Z)\\
&=\lambda_{1}^{2}\,\eta_1({\xi_1})\Omega_{1}(\xi_{2},Z)+\lambda_{1}\lambda_{2}\,\eta_{1}(\xi_2)\Omega_{2}(Z,\xi_1)+\lambda_{2}\lambda_{1}\,\eta_{2}(\xi_1)\Omega_{1}(\xi_2,Z)
\\&\quad+\lambda_{2}^2\,\eta_{2}(\xi_2)\Omega_{2}(Z,\xi_1)\\
&=\left(\lambda_{1}^2+\lambda_{2}^2\right)\Omega_{1}(\xi_{2},Z)
=\eta_{1}\wedge\Omega_{1}(\xi_{1},\xi_{2},Z).
\end{align*}
This proves that the cosymplectic circle is taut.
\end{proof}

Theorem \ref{roundtaut} does not hold in higher dimensions, as it is shown in the following example.

\begin{example}
Let us consider the cosymplectic structures $(\eta_{1},\Omega_{1})$, $(\eta_{2},\Omega_{2})$ on $\mathbb{T}^7$ given by
\begin{align*}
\eta_1:&=d x_7,\qquad \Omega_1:=d x_1 \wedge d x_2 + d x_3 \wedge d x_4 + d x_5 \wedge d x_6,\\
\eta_2:&=d x_2,\qquad \Omega_2:=d x_5 \wedge d x_4 - d x_3 \wedge d x_6 + (d x_1 + d x_3)\wedge d x_7.
\end{align*}
One can prove that they generate a cosymplectic circle on $\mathbb{T}^7$ which is taut but not round, since
the condition (ii) in Proposition \ref{round1} is not satisfied. On the other hand, the cosymplectic circle generated by
\begin{align*}
\eta_1:&=d x_7,\quad \eta_2:=-d x_2, \qquad\Omega_1:=d x_1 \wedge d x_2 + d x_3 \wedge d x_4 + d x_5 \wedge d x_6,\\
\Omega_2:&=d x_3 \wedge (d x_5 + d x_6) + d x_4 \wedge d x_5 + (d x_1 + d x_3)\wedge d x_6 + d x_1 \wedge d x_7,
\end{align*}
is round but not taut.
\end{example}

To any taut cosymplectic circle $\left\{(\eta_{1},\Omega_{1}),(\eta_{2},\Omega_{2})\right\}$ on a $3$-dimensional smooth manifold $M$ it can be associated a complex structure $J$ on $M\times\mathbb{R}$ in a canonical way.
Indeed given an almost cosymplectic structure  $(\eta,\Omega)$ on $M$ we can define an almost symplectic structure (i. e. a nondegenerate $2$-form) on $M\times\mathbb{R}$ by
\begin{equation*}
\omega:=dt\wedge \eta+\Omega.
\end{equation*}
It is well known that $(M,\eta,\Omega)$ is cosymplectic if and only if $(M\times\mathbb{R},\omega)$ is symplectic.
Recall that a pair of  symplectic structures $(\omega_1,\omega_2)$ on an oriented 4-manifold is  said to be a \emph{symplectic couple} \cite{geiges96} if one has $\omega_1 \wedge \omega_2\equiv 0$ and $\omega_1^2$, $\omega_2^2$ are volume forms defining the positive orientation. A symplectic couple is called \emph{conformal} if $\omega_1^2=\omega_2^2$.  It is convenient to extend the definition to the case of nondegenerate 2-forms. In that case one simply speaks of a \emph{couple} or a \emph{conformal couple}, respectively, omitting the term `symplectic'. Two couples are called \emph{equivalent} if at every point they span the same oriented plane of nondegenerate 2-forms.
Now, the cosymplectic structures $(\eta_{1},\Omega_{1})$ and $(\eta_{2},\Omega_{2})$ give rise to two symplectic structures $\omega_1$ and $\omega_2$ on $M\times\mathbb{R}$. They satisfy the following relations.
\begin{align}\label{couple}
\omega_1 \wedge \omega_1 &= 2 dt \wedge\eta_1 \wedge \Omega_1\\
\omega_2 \wedge \omega_2 &= 2 dt \wedge\eta_2 \wedge \Omega_2\\
\omega_1 \wedge \omega_2 &= 2 dt \wedge(\eta_1 \wedge \Omega_2 + \eta_2 \wedge \Omega_1).
\end{align}
As a consequence, we have $\omega_1 \wedge \omega_1=\omega_2 \wedge \omega_2$ if and only if $\eta_1 \wedge \Omega_1=\eta_2 \wedge \Omega_2$.

Hence by Lemma \ref{tautness} we have that $\left(\omega_1,\omega_2\right)$ is a conformal symplectic couple on $M\times\mathbb{R}$  if and only if $\left\{(\eta_{1},\Omega_{1}),(\eta_{2},\Omega_{2})\right\}$ is a taut cosymplectic circle on $M$.
Therefore, by \cite[Theorem 2.2]{geiges96} to any taut cosymplectic circle $\left\{(\eta_{1},\Omega_{1}),(\eta_{2},\Omega_{2})\right\}$ on a $3$-dimensional smooth manifold $M$, we can associate a unique complex structure $J$ on $M\times\mathbb{R}$ defined by the property that the forms of type $(2,0)$ with respect to $J$ are precisely those of the type  $\tilde\omega_1+ i \tilde\omega_2$,  where $(\tilde\omega_1,\tilde\omega_2)$ is any conformal couple equivalent to $(\tilde\omega_1,\tilde\omega_2)$.
The obtained complex structure $J$ is a recursion operator in the sense of \cite{bande}, i.e. it is the
unique endomorphism such that  $i_{X}\omega_1=i_{J X}\omega_2$ for any $X\in TM$.

\section{Cosymplectic spheres and $3$-structures}\label{3structures}
In this section we describe a wide class of examples of almost cosymplectic spheres on a manifold of dimension $4n+3$.

Recall that given an almost cosymplectic structure $(\eta,\Omega)$ there exist a Riemannian metric $g$ and a tensor field $\phi$ such that
\begin{gather}
\phi^2=-I+\eta\otimes\xi, \label{almostcontactmetric0}\\
 \Omega=g(-,\phi -).\label{almostcontactmetric1}
\end{gather}
Then one can prove that $\eta\circ\phi=0$,  $\phi\xi=0$ and
\begin{equation}\label{almostcontactmetric2}
g(\phi X, \phi Y) = g(X,Y) - \eta(X)\eta(Y)
\end{equation}
for any $X,Y\in\Gamma(TM)$. An alternative approach consists in taking \eqref{almostcontactmetric0} as definition.
So one defines an \emph{almost contact structure} (unfortunately, the name is rather misleading, but it is  widely used in literature) as the triplet $(\phi, \xi, \eta)$ satisfying \eqref{almostcontactmetric0} and  $\eta(\xi)=1$. In that case the dimension of $M$ is necessarily odd. Then, a Riemannian metric $g$ is called \emph{compatible} if \eqref{almostcontactmetric2} is satisfied.  The geometric structure $(\phi,\xi,\eta,g)$ is called \emph{almost contact metric structure}.  It follows that $g(X,\phi Y)=-g(\phi X, Y)$, so that the bilinear form $\Omega(X,Y)=g(X,\phi Y)$ is a $2$-form, usually called the \emph{fundamental $2$-form}. Then one can prove that $\eta\wedge\Omega^{n}\neq 0$, where $\dim(M)=2n+1$. For further details we refer the reader to \cite{blairbook} and \cite{survey}.

Now, when on the same manifold $M$ there are given three distinct almost
contact structures  $\left(\phi_1,\xi_1,\eta_1\right)$,
$\left(\phi_2,\xi_2,\eta_2\right)$,
$\left(\phi_3,\xi_3,\eta_3\right)$ satisfying the following
relations, for any even permutation
$\left(\alpha,\beta,\gamma\right)$ of $\left\{1,2,3\right\}$,
\begin{equation} \label{3-structure}
\begin{split}
\phi_\gamma=\phi_{\alpha}\phi_{\beta}-\eta_{\beta}\otimes\xi_{\alpha}=-\phi_{\beta}\phi_{\alpha}+\eta_{\alpha}\otimes\xi_{\beta},\quad\\
\xi_{\gamma}=\phi_{\alpha}\xi_{\beta}=-\phi_{\beta}\xi_{\alpha}, \ \ \eta_{\gamma}= \eta_{\alpha}\circ\phi_{\beta}=-\eta_{\beta}\circ\phi_{\alpha},
\end{split}
\end{equation}
we say that $(\phi_\alpha,\xi_\alpha,\eta_\alpha)_{\alpha\in\left\{1,2,3\right\}}$ is an \emph{almost contact $3$-structure} on $M$.
One proves that the conditions \eqref{3-structure} force the dimension of $M$ to be necessarily  $4n+3$ for some integer $n$.
This notion  was introduced independently by Kuo (\cite{kuo}) and Udriste
(\cite{udriste}). Kuo proved also that given an almost contact $3$-structure
$(\phi_\alpha,\xi_\alpha,\eta_\alpha)_{\alpha\in\left\{1,2,3\right\}}$, there exists a
Riemannian metric $g$ compatible with each  almost
contact structure and hence we can speak of \emph{almost contact
metric $3$-structure}. It is well known that in any almost
$3$-contact metric manifold the Reeb vector fields
$\xi_1,\xi_2,\xi_3$ are orthonormal with respect to the compatible
metric $g$. Moreover, by putting
${\mathcal{H}}=\bigcap_{\alpha=1}^{3}\ker\left(\eta_\alpha\right)$
we obtain a codimension $3$ distribution on $M$ and the tangent
bundle splits as the orthogonal sum
$TM={\mathcal{H}}\oplus{\mathcal{V}}$, where ${\mathcal
V}=\left\langle\xi_1,\xi_2,\xi_3\right\rangle$. The distributions
$\mathcal H$ and $\mathcal V$ are called, respectively,
\emph{horizontal} and \emph{Reeb distribution}.

An almost $3$-contact manifold $M$ is said to be
\emph{hyper-normal} if each almost contact structure
$\left(\phi_\alpha,\xi_\alpha,\eta_\alpha\right)$ is normal. An important class of hyper-normal almost contact $3$-structures is given by the $3$-quasi-Sasakian ones.
A \emph{$3$-quasi-Sasakian structure} is  an almost contact metric
$3$-structure such that each structure
$(\phi_\alpha,\xi_\alpha,\eta_\alpha,g)$ is quasi-Sasakian, i.e. it is normal and the corresponding fundamental $2$-form is closed.
Remarkable subclasses of $3$-quasi-Sasakian manifolds are  $3$-Sasakian and $3$-cosymplectic
manifolds.

Many results on
$3$-quasi-Sasakian manifolds were obtained in \cite{agag08} and
\cite{ijm09}. We collect some of them in the following theorem.

\begin{theorem}[\cite{agag08, ijm09}]\label{principale}
Let $(M,\phi_\alpha,\xi_\alpha,\eta_\alpha,g)$ be a $3$-quasi-Sasakian manifold of
dimension $4n+3$. Then, for any even permutation
$(\alpha,\beta,\gamma)$ of $\left\{1,2,3\right\}$, the Reeb vector
fields satisfy
\begin{equation}\label{lie}
 [\xi_\alpha,\xi_\beta]=c\xi_\gamma,
\end{equation}
for some $c\in\mathbb{R}$ which is zero if and only if the manifold is $3$-cosymplectic. Moreover, the $1$-forms $\eta_1$,
$\eta_2$, $\eta_3$ have the same rank,  called the \emph{rank} of
the $3$-quasi-Sasakian manifold $M$. The rank of $M$ is $1$ if and
only if $M$ is $3$-cosymplectic and it is an integer of the form
$4l+3$, for some $l\leq n$, in the other cases.
\end{theorem}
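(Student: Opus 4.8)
The plan is to reconstruct the argument of \cite{agag08, ijm09}; I only sketch the main steps, and throughout $(\alpha,\beta,\gamma)$ denotes an even permutation of $\{1,2,3\}$. The idea is to encode each quasi-Sasakian factor in one skew-symmetric $(1,1)$-tensor and then feed the algebraic identities \eqref{3-structure} into it. Recall that in a quasi-Sasakian manifold the Reeb field $\xi_\alpha$ is Killing and geodesic, so $\nabla\xi_\alpha=-\psi_\alpha$ for a skew-symmetric $(1,1)$-tensor $\psi_\alpha$ with $\psi_\alpha\xi_\alpha=0$, $\psi_\alpha\phi_\alpha=\phi_\alpha\psi_\alpha$ and $\d\eta_\alpha(X,Y)=2g(X,\psi_\alpha Y)$; in particular $i_{\xi_\alpha}\d\eta_\alpha=0$ and, pointwise, $\operatorname{rank}(\eta_\alpha)=1+\operatorname{rank}(\psi_\alpha)$. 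Since $\nabla$ is torsion-free, $[\xi_\alpha,\xi_\beta]=\psi_\alpha\xi_\beta-\psi_\beta\xi_\alpha$, so the whole statement reduces to understanding how $\psi_\alpha$ acts on $\mathcal V=\langle\xi_1,\xi_2,\xi_3\rangle$ and on $\mathcal H$.

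To prove \eqref{lie}: skew-symmetry and $\psi_\alpha\xi_\alpha=0$ give at once $g(\psi_\alpha\xi_\beta,\xi_\alpha)=g(\psi_\alpha\xi_\beta,\xi_\beta)=0$, and a short computation using the normality of the three structures together with the identities $\xi_\gamma=\phi_\alpha\xi_\beta$ and $\eta_\gamma=\eta_\alpha\circ\phi_\beta$ shows that $\psi_\alpha$ maps $\mathcal V$ into $\mathcal V$; hence $\psi_\alpha\xi_\beta\in\langle\xi_\gamma\rangle$ and $[\xi_\alpha,\xi_\beta]\in\langle\xi_\gamma\rangle$, so, writing $c:=\eta_\gamma([\xi_\alpha,\xi_\beta])$ and checking that this is unchanged under even permutations, $[\xi_\alpha,\xi_\beta]=c\,\xi_\gamma$ with $c$ a function on $M$. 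To see that $c$ is a \emph{global} constant, apply the Jacobi identity to $(\xi_1,\xi_2,\xi_3)$, which yields $\xi_\alpha(c)=0$ for every $\alpha$, and differentiate $\d\eta_\alpha(\xi_\beta,\xi_\gamma)=-c$ along $\mathcal H$, using $\d(\d\eta_\alpha)=0$, $i_{\xi_\alpha}\d\eta_\alpha=0$ and $\nabla_{\xi_\beta}\xi_\beta=0$; this gives $X(c)=0$ for $X\in\mathcal H$, and since $M$ is connected, $c\in\Rr$. Finally, if $M$ is $3$-cosymplectic then each $\d\eta_\alpha=0$, hence $\psi_\alpha=0$ and $c=0$, which is the easy half of the claimed equivalence.

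For the rank statements, note first that since $\psi_\alpha$ maps $\mathcal V$ into $\mathcal V$, skew-symmetry forces $\psi_\alpha(\mathcal H)\subseteq\mathcal H$; setting $r_\alpha:=\operatorname{rank}(\psi_\alpha|_{\mathcal H})$ and using that $\psi_\alpha|_{\mathcal V}$ has rank $2$ when $c\neq0$ and $0$ when $c=0$, we get $\operatorname{rank}(\eta_\alpha)=r_\alpha+3$ in the first case and $\operatorname{rank}(\eta_\alpha)=r_\alpha+1$ in the second. The remaining ingredient is a tensorial analysis of hyper-normality: expanding the three normality tensors and using the closedness of the fundamental $2$-forms, one obtains simultaneously (a) $r_1=r_2=r_3$, constant on $M$, so the three $\eta_\alpha$ have the same, well-defined, rank; (b) if $c=0$ then $\psi_\alpha|_{\mathcal H}=0$ for every $\alpha$, hence $\d\eta_\alpha=0$ and $M$ is $3$-cosymplectic with all $\eta_\alpha$ of rank $1$ --- so, with the easy half above, $M$ being $3$-cosymplectic, $c=0$, and $\operatorname{rank}(\eta_\alpha)=1$ are equivalent; and (c) if $c\neq0$ the common subspace $\operatorname{Im}(\psi_1|_{\mathcal H})=\operatorname{Im}(\psi_2|_{\mathcal H})=\operatorname{Im}(\psi_3|_{\mathcal H})$ is invariant under each of $\phi_1,\phi_2,\phi_3$, hence is a quaternionic subspace of $\mathcal H$, so $r_\alpha\equiv0\pmod4$ and $\operatorname{rank}(\eta_\alpha)=r_\alpha+3=4l+3$ with $4l=r_\alpha\le\dim\mathcal H=4n$, i.e.\ $l\le n$. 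Combining (a), (b) and (c) gives all the assertions.

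The hard part will be steps (a)--(c): it is exactly there that the quaternionic identities \eqref{3-structure} have to be matched against the three normality conditions and the closedness of the fundamental $2$-forms, and this single bundle of computations is what produces at once the equality of the ranks, the dichotomy with the $3$-cosymplectic case, and the congruence $\operatorname{rank}(\eta_\alpha)\equiv3\pmod4$. A secondary technical point is to make sure $c$ is constant on all of the connected manifold $M$, not merely along the leaves of the Reeb distribution $\mathcal V$.
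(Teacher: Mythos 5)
First, a point of comparison: the paper itself gives no proof of this statement --- it is explicitly imported from \cite{agag08,ijm09} (``We collect some of them in the following theorem''), so there is no internal argument to measure you against; the only fair benchmark is whether your sketch would stand on its own. Its overall strategy (encode each structure in $\psi_\alpha=-\nabla\xi_\alpha$, study its action on $\mathcal{V}=\langle\xi_1,\xi_2,\xi_3\rangle$ and on $\mathcal{H}$, and let quaternionic invariance force the congruence $\operatorname{rank}(\eta_\alpha)\equiv 3 \pmod 4$) is indeed consistent with how the cited papers proceed, and the elementary observations you do carry out (skew-symmetry of $\nabla\xi_\alpha$, $[\xi_\alpha,\xi_\beta]=\psi_\alpha\xi_\beta-\psi_\beta\xi_\alpha$, the Cartan-class count $\operatorname{rank}(\eta_\alpha)=1+\operatorname{rank}(\d\eta_\alpha)$, the easy implication $3$-cosymplectic $\Rightarrow c=0$) are correct.

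However, as a proof the proposal has genuine gaps, and they sit exactly where the content of the theorem lies. The invariance $\psi_\alpha(\mathcal{V})\subseteq\mathcal{V}$ is announced as ``a short computation using normality'' but never performed; without it neither \eqref{lie} nor your rank bookkeeping gets off the ground. Items (a)--(c) --- equality and constancy of the ranks, the hard direction $c=0\Rightarrow\psi_\alpha|_{\mathcal{H}}=0$ (i.e.\ $c=0\Rightarrow$ $3$-cosymplectic), the coincidence $\operatorname{Im}(\psi_1|_{\mathcal{H}})=\operatorname{Im}(\psi_2|_{\mathcal{H}})=\operatorname{Im}(\psi_3|_{\mathcal{H}})$ and its invariance under all three $\phi_\alpha$ --- are simply asserted as the output of ``a tensorial analysis of hyper-normality,'' which you yourself flag as the hard part; in the cited papers these correspond to a substantial chain of lemmas (e.g.\ the identities $\d\eta_\alpha(X,\xi_\beta)=0$ and $\d\eta_3(X,\phi_3Y)=\d\eta_1(X,\phi_1Y)=\d\eta_2(X,\phi_2Y)$ on $\mathcal{H}$, which this very paper later quotes as Corollary 3.8 and Lemmas 5.2--5.4 of \cite{agag08}), not a single computation. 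Your argument for the constancy of $c$ also silently uses such unproved facts: expanding $\d(\d\eta_\alpha)(X,\xi_\beta,\xi_\gamma)=0$ produces terms like $\xi_\beta(\d\eta_\alpha(X,\xi_\gamma))$ and $\d\eta_\alpha([X,\xi_\beta],\xi_\gamma)$, which you can only discard after knowing $i_{\xi_\gamma}\d\eta_\alpha$ vanishes on the relevant arguments --- again one of the missing lemmas. Finally, note that ``the same rank'' in the statement presupposes that each $\eta_\alpha$ has \emph{constant} rank on $M$; you address constancy only for $c$, while the constancy of $\operatorname{rank}(\psi_\alpha|_{\mathcal{H}})$ is tucked into assertion (a) without justification. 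So the proposal is a reasonable roadmap to the proof in \cite{agag08,ijm09}, but not yet a proof.
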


Recall that in a quasi-Sasakian manifold of dimension $2n+1$ and of rank $2p+1$, the
characteristic distribution
\begin{equation}\label{eq:C}
{\mathcal C}:=\left\{X\in TM \,|\, i_{X}\eta=0, i_{X}d\eta=0 \right\}
\end{equation}
is integrable and has dimension $2(n-p)$. Moreover, in a $3$-quasi-Sasakian manifold of dimension $4n+3$ and of rank $4l+3$ we can also consider the
distribution
\begin{equation}\label{eq:E}
{\mathcal E}:=\left\{X\in TM \,|\, i_{X}\eta_{\alpha}=0, i_{X}d\eta_{\alpha}=0 \ \hbox{for any $\alpha=1,2,3$} \right\}
\end{equation}
which turns out to be integrable and to have dimension $4(n-l)$.

\medskip

Recall that given two tensor fields $P$ and $Q$ of type $(1,1)$ on a smooth manifold $M$, one can
define a tensor field of type $(1,2)$, denoted by $[P,Q]$, by setting
\begin{align*}
[P,Q](X,Y)&=[PX,QY]-P[QX,Y]-Q[X,PY]+[QX,PY]-Q[PX,Y]\\
&\quad -P[X,QY]+(PQ+QP)[X,Y],
\end{align*}
where $X$ and $Y$ are arbitrary vector fields on $M$.  The tensor
$[P,Q]$ is usually called the \emph{Nijenhuis concomitant}  of $P$
and $Q$.

Now let  $(\phi_\alpha,\xi_\alpha,\eta_\alpha)_{\alpha\in\left\{1,2,3\right\}}$, be an almost contact $3$-structure
on $M$. For any $\alpha,\beta\in\left\{1,2,3\right\}$ we define four
tensors in the following way
\begin{gather}
N^{(1)}_{\alpha,\beta}:=[\phi_{\alpha},\phi_{\beta}]+d\eta_{\alpha}\otimes\xi_{\beta} + d\eta_{\beta}\otimes\xi_{\alpha} \label{uno} \\
N^{(2)}_{\alpha,\beta}(X,Y):=\left({\mathcal L}_{\phi_\alpha
X}\eta_{\beta}\right)(Y)-\left({\mathcal L}_{\phi_\alpha
Y}\eta_{\beta}\right)(X) +
\left({\mathcal L}_{\phi_\beta X}\eta_{\alpha}\right)(Y)-\left({\mathcal L}_{\phi_\beta Y}\eta_{\alpha}\right)(X) \label{due} \\
N^{(3)}_{\alpha,\beta}:={\mathcal L}_{\xi_\alpha}\phi_\beta + {\mathcal L}_{\xi_\beta}\phi_\alpha \label{tre} \\
N^{(4)}_{\alpha,\beta}:={\mathcal L}_{\xi_\alpha}\eta_\beta +
{\mathcal L}_{\xi_\beta}\eta_\alpha. \label{quattro}
\end{gather}
These tensors satisfy
$N^{(i)}_{\alpha,\beta}=N^{(i)}_{\beta,\alpha}$, $1\leq i \leq 4$. Moreover, $N^{(1)}_{\alpha,\alpha}=N^{(1)}_{\phi_\alpha}$ and
$2 N^{(i)}_{\alpha,\alpha}=N^{(i)}_{\phi_\alpha}$, $2\leq i \leq 4$, where
$N^{(i)}_{\phi_\alpha}$ are the  fundamental tensors of an almost
contact manifold (cf. \cite[(1.3)]{blair0}).

The following theorem establishes a general property of hyper-normal almost contact
$3$-structures.

\begin{theorem}\label{concomitant}
Let $(\phi_\alpha,\xi_\alpha,\eta_\alpha)_{\alpha\in\left\{1,2,3\right\}}$ be a hyper-normal almost contact
$3$-structure on $M$. Then for each
$\alpha,\beta\in\left\{1,2,3\right\}$ the tensors
$N^{(1)}_{\alpha,\beta}$, $N^{(2)}_{\alpha,\beta}$,
$N^{(3)}_{\alpha,\beta}$, $N^{(4)}_{\alpha,\beta}$ vanish.
\end{theorem}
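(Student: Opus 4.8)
The plan is to reduce the ``mixed'' cases $\alpha\neq\beta$ to the ``diagonal'' cases $\alpha=\beta$, which are essentially classical, using the structure relations \eqref{3-structure}. Indeed, since the almost contact $3$-structure is hyper-normal, each $(\phi_\alpha,\xi_\alpha,\eta_\alpha)$ is a normal almost contact structure, and by the classical theory of normal almost contact structures (see \cite{blairbook}) all four fundamental tensors $N^{(1)}_{\phi_\alpha},\dots,N^{(4)}_{\phi_\alpha}$ then vanish, for every $\alpha\in\{1,2,3\}$; in view of the identities $N^{(1)}_{\alpha,\alpha}=N^{(1)}_{\phi_\alpha}$ and $2N^{(i)}_{\alpha,\alpha}=N^{(i)}_{\phi_\alpha}$ ($2\leq i\leq 4$) recalled above, the diagonal tensors $N^{(i)}_{\alpha,\alpha}$ all vanish, so it remains only to prove $N^{(i)}_{\alpha,\beta}=0$ for $i=1,\dots,4$ whenever $\alpha\neq\beta$.

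Fix an even permutation $(\alpha,\beta,\gamma)$ of $\{1,2,3\}$. I would substitute into the already-established diagonal identity $N^{(i)}_{\gamma,\gamma}=0$ the expressions provided by \eqref{3-structure}, namely $\phi_\gamma=\phi_\alpha\phi_\beta-\eta_\beta\otimes\xi_\alpha=-\phi_\beta\phi_\alpha+\eta_\alpha\otimes\xi_\beta$, $\xi_\gamma=\phi_\alpha\xi_\beta=-\phi_\beta\xi_\alpha$ and $\eta_\gamma=\eta_\alpha\circ\phi_\beta=-\eta_\beta\circ\phi_\alpha$. Expanding the Nijenhuis concomitant in \eqref{uno} and the Lie-derivative expressions in \eqref{due}--\eqref{quattro} by the Leibniz rule, and simplifying with the purely algebraic consequences of \eqref{3-structure}, in particular $\eta_\alpha(\xi_\beta)=\delta_{\alpha\beta}$, $\eta_\alpha\circ\phi_\alpha=0$, $\phi_\alpha\xi_\alpha=0$ and the anticommutation relation
\[
\phi_\alpha\phi_\beta+\phi_\beta\phi_\alpha=\eta_\alpha\otimes\xi_\beta+\eta_\beta\otimes\xi_\alpha
\]
obtained by subtracting the two expressions for $\phi_\gamma$, one is left with a sum of three kinds of terms: contributions built only from $\phi_\alpha$, $\xi_\alpha$, $\eta_\alpha$ and their Lie derivatives along $\xi_\alpha$, which regroup into components of the tensors $N^{(1)}_{\phi_\alpha},\dots,N^{(4)}_{\phi_\alpha}$; the analogous ``pure $\beta$'' contributions, regrouping into $N^{(1)}_{\phi_\beta},\dots,N^{(4)}_{\phi_\beta}$; and a genuinely ``cross'' part equal to a nonzero scalar multiple of $N^{(i)}_{\alpha,\beta}$ plus further cross terms which reduce to the tensors $N^{(j)}_{\alpha,\alpha}$ and $N^{(j)}_{\beta,\beta}$. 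Since $N^{(i)}_{\gamma,\gamma}$ and every summand on the right-hand side other than the multiple of $N^{(i)}_{\alpha,\beta}$ vanish, one concludes $N^{(i)}_{\alpha,\beta}=0$. It is convenient to carry this out for $i=4$ first, where only Lie derivatives of $1$-forms occur and the expansion is shortest, then for $i=3$, $i=2$ and finally $i=1$, each step being allowed to use the relations already obtained.

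Conceptually, this is the assertion that the endomorphisms $J_\alpha$ of $T(M\times\Rr)$ built from $(\phi_\alpha,\xi_\alpha,\eta_\alpha)$ in the usual way satisfy, by \eqref{3-structure}, the quaternionic relations $J_\alpha J_\beta=J_\gamma=-J_\beta J_\alpha$, so that $(J_1,J_2,J_3)$ is an almost hypercomplex structure; hyper-normality says that all three $J_\alpha$ are integrable, hence $(J_1,J_2,J_3)$ is hypercomplex and, via its Obata connection, every almost complex structure in its twistor $2$-sphere is integrable, which is exactly the vanishing of all the Nijenhuis concomitants of the $J_\alpha$ --- and these concomitants, decomposed along the splitting $T(M\times\Rr)=TM\oplus\Rr\,\partial_t$, are precisely $N^{(1)}_{\alpha,\beta},\dots,N^{(4)}_{\alpha,\beta}$. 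This viewpoint is an alternative route to the same conclusion, and also explains why exactly four tensors appear.

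I expect the main obstacle to be the bookkeeping in the cross-term analysis. The concomitant \eqref{uno} is not bilinear in its two $(1,1)$-arguments, because of the term $(PQ+QP)[X,Y]$, and the correction $\eta_\beta\otimes\xi_\alpha$ inside $\phi_\gamma$ generates a large number of lower-order terms upon expansion; checking that, after all simplifications, these residual terms organize themselves exactly into the vanishing diagonal tensors --- leaving a clean multiple of $N^{(i)}_{\alpha,\beta}$ --- is the delicate point. It is here that both forms of each relation in \eqref{3-structure} (the $\phi_\alpha\phi_\beta$ and the $\phi_\beta\phi_\alpha$ versions, and both signs for $\xi_\gamma$ and $\eta_\gamma$) have to be used in order to force the cancellations, and it is this combinatorial symmetrization, rather than any conceptual difficulty, that makes the computation lengthy.
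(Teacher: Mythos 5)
Your ``conceptual'' aside is, in substance, the paper's actual proof: the authors lift the structure to $M\times\Rr$ via the standard tensors $J_\alpha$, note that hyper-normality makes $(J_1,J_2,J_3)$ a hypercomplex structure, show by a direct (admittedly long) computation that the components of the concomitant $[J_\alpha,J_\beta]$ along the splitting $T(M\times\Rr)=TM\oplus\Rr\,\tfrac{d}{dt}$ are precisely $N^{(1)}_{\alpha,\beta},\dots,N^{(4)}_{\alpha,\beta}$ (formulas \eqref{formula1}--\eqref{formula2}), and then invoke \cite[Theorem 3.1]{yano-ako73} for the vanishing of the concomitants of a hypercomplex structure --- which is exactly the twistor-sphere/Obata-connection statement you describe. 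So that part of your proposal is sound and coincides with the paper's route, up to replacing the citation by the Obata argument and up to actually carrying out the decomposition computation, which you assert but do not perform.

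Your primary route, however, has a genuine gap at its central step. You claim that substituting $\phi_\gamma=\phi_\alpha\phi_\beta-\eta_\beta\otimes\xi_\alpha$, $\xi_\gamma=\phi_\alpha\xi_\beta$, $\eta_\gamma=\eta_\alpha\circ\phi_\beta$ into the vanishing diagonal tensor $N^{(i)}_{\gamma,\gamma}$ and expanding leaves, besides terms that vanish by normality, a clean nonzero scalar multiple of $N^{(i)}_{\alpha,\beta}$. This claim is exactly the nontrivial content of the theorem, and it is nowhere verified. Concretely: expanding brackets such as $[\phi_\alpha\phi_\beta X,\phi_\alpha\phi_\beta Y]$ produces cross terms whose natural normal form involves $N^{(i)}_{\alpha,\beta}$ \emph{and} the diagonal tensors evaluated at arguments twisted by $\phi_\alpha$, $\phi_\beta$ (this is what happens in the Yano--Ako identities); since the $\phi$'s are not invertible (they annihilate the Reeb directions), one cannot simply strip off these compositions to isolate $N^{(i)}_{\alpha,\beta}(X,Y)$, and the four families $N^{(1)},\dots,N^{(4)}$ mix with one another under such an expansion, so the ``$i=4,3,2,1$ in order'' scheme is not obviously consistent. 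Note also that the implication ``the three diagonal Nijenhuis tensors vanish $\Rightarrow$ the mixed concomitants vanish'' is precisely the statement that integrability of $J_1,J_2,J_3$ propagates to the whole twistor sphere; it is a theorem (Obata, Yano--Ako), not bookkeeping, so any direct computation must in effect reprove such an identity, which your sketch does not do. A further small correction: the concomitant \eqref{uno} \emph{is} $\mathbb{R}$-bilinear and symmetric in its two arguments (the term $(PQ+QP)[X,Y]$ is bilinear in $(P,Q)$); indeed this bilinearity is what makes the polarization/twistor argument work, since vanishing of all concomitants is equivalent to integrability of every unit combination $\lambda_1J_1+\lambda_2J_2+\lambda_3J_3$. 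The fix is simply to promote your second route to the main proof and carry out the decomposition of $[J_\alpha,J_\beta]$ along $TM\oplus\Rr\,\tfrac{d}{dt}$, as the paper does.
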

\begin{proof}
For each $\alpha\in\left\{1,2,3\right\}$ we define  on the product
$M\times \mathbb{R}$ the $(1,1)$-tensor
\begin{equation*}
J_\alpha\left(X, f \frac{d}{dt}\right)=\left(\phi_\alpha X - f\xi_\alpha, \eta_\alpha(X)\frac{d}{dt}\right)
\end{equation*}
where $X$ is a vector field on $M$, $f$ a smooth function on
$M\times\mathbb{R}$ and $t$ denotes the coordinate function on
$\mathbb{R}$. It is well-known that $(J_1,J_2,J_3)$ defines a
hyper-complex structure on $M\times \mathbb{R}$. Now let us fix
$\alpha,\beta \in \left\{1,2,3\right\}$ and let us evaluate the
concomitant $[J_\alpha,J_\beta]$ of the vector fields of type
$((X,0),(Y,0))$ and $((X,0),(0,\frac{d}{dt}))$, where $X,Y$ are
vector fields on $M$. After a  long computation,
by using \eqref{3-structure} and \eqref{uno}--\eqref{quattro}, we get
\begin{gather}
[J_\alpha, J_\beta]\left((X,0),(Y,0)\right)=\left(N^{(1)}_{\alpha,\beta}(X,Y),N^{(2)}_{\alpha,\beta}(X,Y)\frac{d}{dt}\right), \label{formula1} \\
[J_\alpha,
J_\beta]\left(\left(X,0\right),\left(0,\frac{d}{dt}\right)\right)=\left((N^{(3)}_{\alpha,\beta}(X),N^{(4)}_{\alpha,\beta}(X)\frac{d}{dt}\right)
\label{formula2}.
\end{gather}
Because of \cite[Theorem 3.1]{yano-ako73} each Nijenhuis concomitant
$[J_\alpha,J_\beta]$ vanishes and so from
\eqref{formula1}--\eqref{formula2} the vanishing of
$N^{(1)}_{\alpha,\beta}$, $N^{(2)}_{\alpha,\beta}$,
$N^{(3)}_{\alpha,\beta}$, $N^{(4)}_{\alpha,\beta}$ follows.
\end{proof}


\begin{theorem}\label{normal}
Let $(\phi_\alpha,\xi_\alpha,\eta_\alpha,g)_{\alpha\in\left\{1,2,3\right\}}$ be an almost contact
metric $3$-structure on $M$. Then, for any $\lambda:=(\lambda_1,\lambda_2,\lambda_3)\in \mathbb{S}^{2}$  the tensors
\begin{equation}\label{acs}
\phi_\lambda:=\lambda_1 \phi_1 + \lambda_2 \phi_2 + \lambda_3 \phi_3, \ \ \,
\xi_\lambda:=\lambda_1 \xi_1 + \lambda_2 \xi_2 + \lambda_3 \xi_3, \ \ \
\eta_\lambda:=\lambda_1 \eta_1 + \lambda_2 \eta_2 + \lambda_3 \eta_3,
\end{equation}
define an almost contact structure on $M$, compatible with the Riemannian metric $g$.  Furthermore,   if
$(\phi_\alpha,\xi_\alpha,\eta_\alpha)_{\alpha\in\left\{1,2,3\right\}}$ is hyper-normal, then
$(\phi_\lambda,\xi_\lambda,\eta_\lambda,g)$ is a normal almost contact metric structure on
$M$.
\end{theorem}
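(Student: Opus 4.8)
The plan is to verify the statement in two stages. First I would check that for each $\lambda\in\mathbb{S}^2$ the triple $(\phi_\lambda,\xi_\lambda,\eta_\lambda)$ is an almost contact structure compatible with $g$; this is a pointwise linear-algebra computation. One computes $\eta_\lambda(\xi_\lambda)=\sum_{\alpha,\beta}\lambda_\alpha\lambda_\beta\,\eta_\alpha(\xi_\beta)$; since $\eta_\alpha(\xi_\beta)=\delta_{\alpha\beta}$ (this follows from \eqref{3-structure}: for $\alpha\neq\beta$, $\eta_\alpha(\xi_\beta)=(\eta_\gamma\circ\phi_\beta)(\xi_\beta)=\eta_\gamma(\phi_\beta\xi_\beta)=0$), this sum collapses to $\sum_\alpha\lambda_\alpha^2=1$. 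For $\phi_\lambda^2=-I+\eta_\lambda\otimes\xi_\lambda$ one expands $\phi_\lambda^2=\sum_{\alpha,\beta}\lambda_\alpha\lambda_\beta\phi_\alpha\phi_\beta$, splits off the diagonal $\sum_\alpha\lambda_\alpha^2\phi_\alpha^2=\sum_\alpha\lambda_\alpha^2(-I+\eta_\alpha\otimes\xi_\alpha)$, and for the off-diagonal terms uses the identity $\phi_\alpha\phi_\beta+\phi_\beta\phi_\alpha=\eta_\beta\otimes\xi_\alpha+\eta_\alpha\otimes\xi_\beta$ (read off from \eqref{3-structure}) so that $\sum_{\alpha<\beta}\lambda_\alpha\lambda_\beta(\phi_\alpha\phi_\beta+\phi_\beta\phi_\alpha)=\sum_{\alpha\neq\beta}\lambda_\alpha\lambda_\beta\,\eta_\beta\otimes\xi_\alpha$; adding everything gives $-I+(\sum_\alpha\lambda_\alpha\eta_\alpha)\otimes(\sum_\beta\lambda_\beta\xi_\beta)=-I+\eta_\lambda\otimes\xi_\lambda$. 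Compatibility $g(\phi_\lambda X,\phi_\lambda Y)=g(X,Y)-\eta_\lambda(X)\eta_\lambda(Y)$ then follows formally from $\phi_\lambda^2=-I+\eta_\lambda\otimes\xi_\lambda$, $\eta_\lambda(\xi_\lambda)=1$ and skew-symmetry of each $\phi_\alpha$ with respect to $g$, exactly as in the single-structure case recalled before \eqref{almostcontactmetric2}.

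For the second stage — normality when the $3$-structure is hyper-normal — the plan is to reduce to the fundamental tensor $N^{(1)}_{\phi_\lambda}=[\phi_\lambda,\phi_\lambda]+d\eta_\lambda\otimes\xi_\lambda$, since an almost contact metric structure is normal precisely when this tensor vanishes together with the auxiliary tensors $N^{(2)},N^{(3)},N^{(4)}$ of $\phi_\lambda$ (and in fact vanishing of $N^{(1)}_{\phi_\lambda}$ alone is the standard normality criterion, cf.\ \cite{blair0}). The key point is bilinearity: by the very definition of the Nijenhuis concomitant $[P,Q]$ recalled above, the map $(P,Q)\mapsto[P,Q]$ is symmetric and bilinear in $P,Q$, and likewise $\eta\mapsto d\eta\otimes\xi$ depends bilinearly on the pair $(\eta,\xi)$ when we let both vary linearly. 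Hence
\[
N^{(1)}_{\phi_\lambda}=\sum_{\alpha,\beta=1}^{3}\lambda_\alpha\lambda_\beta\bigl([\phi_\alpha,\phi_\beta]+d\eta_\alpha\otimes\xi_\beta+d\eta_\beta\otimes\xi_\alpha\bigr)\cdot\tfrac12\cdot 2
=\sum_{\alpha,\beta}\lambda_\alpha\lambda_\beta\,N^{(1)}_{\alpha,\beta},
\]
where I have matched the off-diagonal symmetrization with the definition \eqref{uno} of $N^{(1)}_{\alpha,\beta}$ and noted $N^{(1)}_{\alpha,\alpha}=N^{(1)}_{\phi_\alpha}$. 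By Theorem~\ref{concomitant}, every $N^{(1)}_{\alpha,\beta}$ vanishes under the hyper-normality hypothesis, so $N^{(1)}_{\phi_\lambda}=0$; the same bilinear expansion applied to \eqref{due}--\eqref{quattro} shows $N^{(2)}_{\phi_\lambda}=N^{(3)}_{\phi_\lambda}=N^{(4)}_{\phi_\lambda}=0$. Therefore $(\phi_\lambda,\xi_\lambda,\eta_\lambda,g)$ is normal.

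The main obstacle I anticipate is bookkeeping rather than conceptual: one must be careful that the auxiliary tensors $N^{(i)}_{\phi_\lambda}$ attached to the single structure $(\phi_\lambda,\xi_\lambda,\eta_\lambda)$ really do expand as $\sum_{\alpha,\beta}\lambda_\alpha\lambda_\beta N^{(i)}_{\alpha,\beta}$ with the normalization chosen in \eqref{uno}--\eqref{quattro} (the factor-of-two discrepancy noted in the excerpt, $N^{(1)}_{\alpha,\alpha}=N^{(1)}_{\phi_\alpha}$ but $2N^{(i)}_{\alpha,\alpha}=N^{(i)}_{\phi_\alpha}$ for $i\geq2$, must be tracked correctly, though it is harmless since all the relevant tensors vanish). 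A cleaner route that sidesteps this entirely is to run the argument of Theorem~\ref{concomitant} directly: form the almost complex structure $J_\lambda$ on $M\times\mathbb{R}$ built from $(\phi_\lambda,\xi_\lambda,\eta_\lambda)$, observe $J_\lambda=\lambda_1J_1+\lambda_2J_2+\lambda_3J_3$, and note that since $(J_1,J_2,J_3)$ is hypercomplex, $J_\lambda$ is itself a (unit-quaternion) integrable complex structure on $M\times\mathbb{R}$; integrability of $J_\lambda$ is exactly the normality of $(\phi_\lambda,\xi_\lambda,\eta_\lambda)$ by the standard correspondence. I would present the bilinear-expansion argument as the main proof and remark on this alternative.
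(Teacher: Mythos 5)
Your proposal is correct and follows essentially the same route as the paper: the same pointwise verification of $\eta_\lambda(\xi_\lambda)=1$ and $\phi_\lambda^2=-I+\eta_\lambda\otimes\xi_\lambda$ via the anticommutation relations from \eqref{3-structure}, and the same reduction of normality to the vanishing of the tensors $N^{(1)}_{\alpha,\beta}$ supplied by Theorem~\ref{concomitant}. Your explicit appeal to the bilinearity and symmetry of the Nijenhuis concomitant cleanly justifies the expansion of $N^{(1)}_{\phi_\lambda}$ that the paper dismisses as ``a very long but straightforward computation,'' and your shortcut for compatibility via skew-symmetry of $\phi_\lambda$ is a harmless simplification of the paper's direct expansion.
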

\begin{proof}
First, by using the relations
$\eta_\alpha(\xi_\beta)=\delta_{\alpha\beta}$ we get
\begin{equation*}
\eta_{\lambda}(\xi_{\lambda})=\lambda_{1}^{2}\eta_{1}(\xi_{1})+\lambda_{2}^{2}\eta_{2}(\xi_{2})+\lambda_{3}^{2}\eta_{3}(\xi_{3})=1.
\end{equation*}
Next, due to \eqref{3-structure}, we have that
\begin{align*}
\phi^2_{\lambda}&= \sum_{\alpha=1}^{3}\lambda_{\alpha}^{2}\phi_{\alpha}^{2}+ \sum_{\alpha<\beta} \lambda_{\alpha}\lambda_{\beta}(\phi_{\alpha}\phi_{\beta}+\phi_{\beta}\phi_{\alpha})\\
&= \sum_{\alpha=1}^{3} (-\lambda_{\alpha}^{2} I + \lambda_{\alpha}^{2} \eta_{\alpha}\otimes\xi_{\alpha}) +
\sum_{\alpha<\beta}\lambda_{\alpha}\lambda_{\beta}(\eta_{\alpha}\otimes\xi_{\beta}+ \eta_{\beta}\otimes \xi_{\alpha})
\\ &= - I + \sum_{\alpha=1}^{3} (\lambda_{\alpha}\eta_{\alpha})\otimes (\lambda_{\alpha}\xi_{\alpha})
 + \sum_{\alpha\neq \beta} (\lambda_{\alpha}\eta_{\alpha})\otimes (\lambda_{\beta}\xi_{\beta})
\\ &= - I + \eta_{\lambda} \otimes \xi_{\lambda}.
 \end{align*}


Finally, we prove that the almost contact structure
$(\phi_{\lambda},\xi_{\lambda},\eta_{\lambda})$ is compatible with $g$. Indeed, for any
$X,Y\in\Gamma(TM)$ one has
\begin{align*}
g(\phi_{\lambda} X,\phi_{\lambda} Y)&=\lambda_{1}^{2}g(\phi_{1}X,\phi_{1}Y)+\lambda_{2}^{2}g(\phi_{2}X,\phi_{2}Y)+\lambda_{3}^{2}g(\phi_{3}X,\phi_{3}Y)\\
&\quad+\lambda_{1}\lambda_{2}g(\phi_{1}X,\phi_{2}Y)+\lambda_{1}\lambda_{3}g(\phi_{1}X,\phi_{3}Y)+\lambda_{1}\lambda_{2}g(\phi_{2}X,\phi_{1}Y)\\&\quad+\lambda_{2}\lambda_{3}g(\phi_{2}X,\phi_{3}Y)
+\lambda_{1}\lambda_{3}g(\phi_{3}X,\phi_{1}Y)+\lambda_{2}\lambda_{3}g(\phi_{3}X,\phi_{2}Y)\\
&=(\lambda_{1}^{2}+\lambda_{2}^{2}+\lambda_{3}^{2})g(X,Y)-\lambda_{1}^{2}\eta_{1}(X)\eta_{1}(Y)-\lambda_{2}^{2}\eta_{2}(X)\eta_{2}(Y)\\
&\quad-\lambda_{3}^{2}\eta_{3}(X)\eta_{3}(Y)
-\lambda_{1}\lambda_{2}g(X,\eta_{1}(Y)\xi_{2})-\lambda_{1}\lambda_{2}g(X,\eta_{2}(Y)\xi_{1})\\
&\quad-\lambda_{1}\lambda_{3}g(X,\eta_{1}(Y)\xi_{3})-\lambda_{1}\lambda_{3}g(X,\eta_{3}(Y)\xi_{1})-\lambda_{2}\lambda_{3}g(X,\eta_{2}(Y)\xi_{3})\\
&\quad-\lambda_{2}\lambda_{3}g(X,\eta_{3}(Y)\xi_{2})\\
&=g(X,Y)-\lambda_{1}^{2}\eta_{1}(X)\eta_{1}(Y)-\lambda_{2}^{2}\eta_{2}(X)\eta_{2}(Y)-\lambda_{3}^{2}\eta_{3}(X)\eta_{3}(Y)\\
&\quad-\lambda_{1}\lambda_{2}\eta_{2}(X)\eta_{1}(Y)-\lambda_{1}\lambda_{2}\eta_{1}(X)\eta_{2}(Y)-\lambda_{1}\lambda_{3}\eta_{3}(X)\eta_{1}(Y)\\
&\quad-\lambda_{1}\lambda_{3}\eta_{1}(X)\eta_{3}(Y)-\lambda_{2}\lambda_{3}\eta_{3}(X)\eta_{2}(Y)-\lambda_{2}\lambda_{3}\eta_{2}(X)\eta_{3}(Y)\\
&=g(X,Y)-\eta_{\lambda}(X)\eta_{\lambda}(Y).
\end{align*}
Now we pass to prove the second part of the theorem. By a very long
but straightforward computation one can prove the relation
\begin{align*}
N^{(1)}_{\phi_{\lambda}}=N^{(1)}_{\phi_1}+N^{(1)}_{\phi_2}+N^{(1)}_{\phi_3} + \lambda_{1}\lambda_{2}N^{(1)}_{1,2} + \lambda_{1}\lambda_{3}N^{(1)}_{1,3} +
\lambda_{2}\lambda_{3}N^{(1)}_{2,3}.
\end{align*}
Since $(\phi_\alpha,\xi_\alpha,\eta_\alpha)_{_{\alpha\in\left\{1,2,3\right\}}}$ is a hyper-normal
almost contact 3-structure, we have that
$N^{(1)}_{\phi_1}=N^{(1)}_{\phi_2}=N^{(1)}_{\phi_3}=0$ and, by
Theorem \ref{concomitant},  $N^{(1)}_{\alpha,\beta}=0$ for any
$\alpha,\beta\in\left\{1,2,3\right\}$. Therefore $N^{(1)}_{\phi_{\lambda}}=0$.
\end{proof}

\begin{corollary}\label{sphere3s}
Let $(\phi_\alpha,\xi_\alpha,\eta_\alpha,g)$ be an almost contact metric $3$-structure on a $(4n+3)$-dimensional smooth manifold $M$. Then
$\{(\eta_{1},\Omega_{1}),(\eta_{2},\Omega_{2}),(\eta_{3},\Omega_{3})\}$, being $\Omega_\alpha:=g(\cdot\,,\phi_{\alpha}\cdot)$, is an almost cosymplectic
sphere which is both round and taut.
\end{corollary}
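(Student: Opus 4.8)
The plan is to read everything off Theorem~\ref{normal} together with the elementary identities of an almost contact metric structure, with $\lambda=(\lambda_1,\lambda_2,\lambda_3)$ ranging over $\mathbb{S}^2$ and $(\phi_\lambda,\xi_\lambda,\eta_\lambda)$ as in \eqref{acs}.

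First I would check that the family is an almost cosymplectic sphere at all. Since $\Omega_\alpha=g(\cdot,\phi_\alpha\cdot)$, the form $\Omega_\lambda:=\lambda_1\Omega_1+\lambda_2\Omega_2+\lambda_3\Omega_3$ equals $g(\cdot,\phi_\lambda\cdot)$, i.e.\ it is the fundamental $2$-form of the triple $(\phi_\lambda,\xi_\lambda,\eta_\lambda)$. By Theorem~\ref{normal} this triple is an almost contact structure compatible with $g$, and for any almost contact metric manifold of dimension $2m+1$ one has $\eta\wedge\Omega^m\neq0$ (noted above in this section). Taking $2m+1=4n+3$, i.e.\ $m=2n+1$, this says precisely that $\eta_\lambda\wedge\Omega_\lambda^{2n+1}$ is a volume form for every $\lambda\in\mathbb{S}^2$, so $\{(\eta_1,\Omega_1),(\eta_2,\Omega_2),(\eta_3,\Omega_3)\}$ is an almost cosymplectic sphere.

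Next, for roundness I would verify that $\xi_\lambda:=\lambda_1\xi_1+\lambda_2\xi_2+\lambda_3\xi_3$ satisfies the two defining conditions \eqref{reeb} of the Reeb field of $(\eta_\lambda,\Omega_\lambda)$. From $\eta_\alpha(\xi_\beta)=\delta_{\alpha\beta}$ one gets $\eta_\lambda(\xi_\lambda)=\sum_\alpha\lambda_\alpha^2=1$. Using $\phi_\alpha\xi_\alpha=0$ and, for $\alpha\neq\beta$, the relation $\phi_\alpha\xi_\beta=-\phi_\beta\xi_\alpha$ from \eqref{3-structure}, the cross terms cancel in pairs and one finds $\phi_\lambda\xi_\lambda=0$; hence $(i_{\xi_\lambda}\Omega_\lambda)(Y)=g(\xi_\lambda,\phi_\lambda Y)=-g(\phi_\lambda\xi_\lambda,Y)=0$ by the skew-symmetry of $g(\cdot,\phi_\lambda\cdot)$. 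Thus $\xi_\lambda$ is the Reeb field of $(\eta_\lambda,\Omega_\lambda)$ and the sphere is round. (Equivalently one may check conditions (i)--(ii) of Proposition~\ref{round1}, which here reduce exactly to $\eta_\alpha(\xi_\beta)+\eta_\beta(\xi_\alpha)=0$ and $\phi_\alpha\xi_\beta+\phi_\beta\xi_\alpha=0$ for $\alpha\neq\beta$, both immediate from \eqref{3-structure}.)

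Finally, for tautness the crucial observation is that all the triples $(\phi_\lambda,\xi_\lambda,\eta_\lambda,g)$ share the \emph{same} Riemannian metric $g$: for an almost contact metric structure of dimension $2m+1$ the top-form $\eta\wedge\Omega^m$ has constant pointwise norm $m!$ relative to $g$. Since $\lambda\mapsto\eta_\lambda\wedge\Omega_\lambda^{2n+1}$ is a (polynomial, hence continuous) family of volume forms over the \emph{connected} sphere $\mathbb{S}^2$, its quotient by any fixed auxiliary volume form is a continuous function on $\mathbb{S}^2\times M$ valued in $\{\pm(2n+1)!\}$, hence constant in $\lambda$; therefore $\eta_\lambda\wedge\Omega_\lambda^{2n+1}$ is literally one and the same volume form for every $\lambda$, and the sphere is taut. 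I expect this last step---ruling out that two values of $\lambda$ give the volume form with opposite signs---to be the only point needing an argument beyond routine bookkeeping (in dimension $3$ it was absorbed into Lemma~\ref{tautness}); the connectedness of $\mathbb{S}^2$ is what makes it work here.
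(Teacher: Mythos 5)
Your proposal is correct, and for the two easy parts (that each $(\eta_\lambda,\Omega_\lambda)$ is almost cosymplectic, and roundness) it follows the paper exactly: both reduce to Theorem \ref{normal}, the paper simply saying that roundness follows ``directly'' while you spell out $\eta_\lambda(\xi_\lambda)=1$, $\phi_\lambda\xi_\lambda=0$ and hence $i_{\xi_\lambda}\Omega_\lambda=g(\xi_\lambda,\phi_\lambda\cdot)=0$. Where you genuinely diverge is tautness. The paper proves it by a hands-on evaluation: it fixes one hyper-adapted orthonormal frame $(\xi_1,\xi_2,\xi_3,X_i,\phi_1X_i,\phi_2X_i,\phi_3X_i)$, computes all the pairings \eqref{conti1}--\eqref{conti4} of $\Omega_\lambda$ on it, and checks that $\eta_\lambda\wedge\Omega_\lambda^{2n+1}$ takes a $\lambda$-independent value on this fixed frame, so the forms coincide outright (sign included). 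You instead invoke, for each $\lambda$ separately, the standard pointwise fact that an almost contact metric structure compatible with the fixed metric $g$ satisfies $\eta_\lambda\wedge\Omega_\lambda^{2n+1}=\pm(2n+1)!\,\mathrm{vol}_g$ in a $\lambda$-dependent adapted frame, and then remove the sign ambiguity by continuity of $\lambda\mapsto\eta_\lambda\wedge\Omega_\lambda^{2n+1}$ over the connected sphere $\mathbb{S}^2$ (pointwise in $M$, so connectedness of $M$ is not needed). Both arguments are sound; what your route buys is that it bypasses the combinatorics of expanding $\Omega_\lambda^{2n+1}$ against the hyper-adapted frame (which is the delicate bookkeeping step in the paper's proof), at the cost of needing the auxiliary sign/continuity argument, which the paper's explicit computation renders unnecessary because it identifies the common value, sign and all. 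Your parenthetical alternative for roundness via Proposition \ref{round1} is also fine, with the small caveat that that proposition is stated for cosymplectic $p$-spheres while here the sphere is only almost cosymplectic; its proof never uses closedness, but the direct verification of \eqref{reeb} that you give first is the cleaner reference.
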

\begin{proof}
With the notation of Theorem \ref{normal}, for each $\alpha\in\left\{1,2,3\right\}$ let $\Omega_{\alpha}:=g(\cdot\, ,\phi_\alpha \cdot)$ be the fundamental $2$-form of the almost contact metric structure $(\phi_\alpha,\xi_\alpha,\eta_\alpha)$.
For each $\lambda=(\lambda_{1},\lambda_{2},\lambda_{3})\in\mathbb{S}^2$ let $(\phi_\lambda,\xi_\lambda,\eta_\lambda,g)$ denote the almost contact metric structure defined by \eqref{acs}, and let $\Omega_\lambda:=g(\cdot\, ,\phi_\lambda \cdot)$ be the corresponding fundamental $2$-form.
Then by the general theory of almost contact metric structures, one has that $\eta_{\lambda}\wedge\Omega_{\lambda}^{2n+1}\neq 0$. This proves that the family
$\left\{\left(\eta_{\lambda},\Omega_{\lambda}\right)\right\}_{\lambda\in\mathbb{S}^2}$ defines an almost cosymplectic sphere on $M$.  Directly from Theorem \ref{normal} it follows that such an almost cosymplectic sphere is round. Thus it
only remains to prove that it is taut. To this aim, let us consider a $\phi$-basis, i.e. a (local) orthonormal basis $\left(\xi_{1},\xi_{2},\xi_{3},X_{1},\ldots,X_{n},Y_{1},\ldots,Y_{n},Z_{1},\ldots,Z_{n},U_{1},\ldots,U_{n}\right)$ such that $X_i\in\Gamma(\mathcal H)$, $Y_{i}=\phi_{1}X_i$, $Z_{i}=\phi_{2}X_i$, $U_{i}=\phi_{3}X_i$ for all $i\in\left\{1,\ldots,n\right\}$.
Using that $\Omega_\lambda=\lambda_{1}\Omega_1 + \lambda_{2}\Omega_2 + \lambda_{3}\Omega_3$, one easily finds that
\begin{equation}\label{conti3}
\Omega_{\lambda}(\xi_{1},\xi_{2})=-\lambda_{3}, \ \ \ \Omega_{\lambda}(\xi_{1},\xi_{3})=\lambda_{2}, \ \ \ \Omega_{\lambda}(\xi_{2},\xi_{3})=-\lambda_{1}
\end{equation}
and
\begin{equation}\label{conti4}
\Omega_{\lambda}(\xi_{\alpha},X_{i})=\Omega_{\lambda}(\xi_{\alpha},Y_{i})=\Omega_{\lambda}(\xi_{\alpha},Z_{i})=\Omega_{\lambda}(\xi_{\alpha},U_{i})=0
\end{equation}
for any $\alpha\in\left\{1,2,3\right\}$ and $i\in\left\{1,\ldots,n\right\}$. By using  \eqref{conti3} and \eqref{conti4}, one has that
\begin{equation}\label{eq-tocompute}
\begin{aligned}
&\eta_\lambda\wedge\Omega_\lambda^{2n+1}\left(\xi_{1},\xi_{2},\xi_{3},X_{1},\ldots,X_{n},Y_{1},\ldots,Y_{n},Z_{1},\ldots,Z_{n},U_{1},\ldots,U_{n}\right)\\
&=(2n+1)(\eta_\lambda(\xi_1)\Omega_\lambda(\xi_2,\xi_3)\Omega_\lambda^{2n}\left(X_{1},\ldots,X_{n},Y_{1},\ldots,Y_{n},Z_{1},\ldots,Z_{n},U_{1},\ldots,U_{n}\right)\\
&\quad-\eta_\lambda(\xi_2)\Omega_\lambda(\xi_1,\xi_3)\Omega_\lambda^{2n}\left(X_{1},\ldots,X_{n},Y_{1},\ldots,Y_{n},Z_{1},\ldots,Z_{n},U_{1},\ldots,U_{n}\right)\\
&\quad+\eta_{\lambda}(\xi_3)\Omega_\lambda(\xi_1,\xi_2)\Omega_\lambda^{2n}\left(X_{1},\ldots,X_{n},Y_{1},\ldots,Y_{n},Z_{1},\ldots,Z_{n},U_{1},\ldots,U_{n}\right))\\
&=(2n+1)\bigl( \eta_{\lambda}(\xi_1)\Omega_\lambda(\xi_2,\xi_3)-\eta_\lambda(\xi_2)\Omega_\lambda(\xi_1,\xi_3)+\eta_\lambda(\xi_3)\Omega_\lambda (\xi_{1},\xi_{2}) \bigr)\cdot\\
&\quad \Omega_\lambda^{2n}\left(X_{1},\ldots,X_{n},Y_{1},\ldots,Y_{n},Z_{1},\ldots,Z_{n},U_{1},\ldots,U_{n}\right).
\end{aligned}
\end{equation}
In order to compute the last term, first notice that
\begin{align*}
\Omega_\lambda^{2n}=(\lambda_{1}\Omega_1 + \lambda_{2}\Omega_2 + \lambda_{3}\Omega_3)^{2n}=\sum_{p_{1}+p_{2}+p_{3}=2n}\frac{(2n)!}{p_{1}!p_{2}!p_{3}!}\lambda_{1}^{p_{1}}\lambda_{2}^{p_{2}}\lambda_{3}^{p_{3}}\Omega_1^{p_{1}}\wedge\Omega_2^{p_{2}}\wedge\Omega_3^{p_{3}}.
\end{align*}
Using the fact that
\begin{equation}\label{eq1}
\begin{aligned}
\Omega_1&(X_{i},Y_{j})=\Omega_1(Z_{i},U_{j})=\Omega_2(X_{i},Z_{j})\\
&=-\Omega_2(Y_{i},U_{j})=\Omega_3(X_{i},U_{j})=\Omega_3(Y_{i},Z_{j})=-\delta_{ij},
\end{aligned}
\end{equation}
\begin{align}\label{eq2}
\Omega_\alpha(X_{i},X_{j})=\Omega_\alpha(Y_{i},Y_{j})=\Omega_\alpha(Z_{i},Z_{j})=\Omega_\alpha(U_{i},U_{j})=0,\qquad \alpha=1,2,3,
\end{align}
and that $\Omega_1, \Omega_2, \Omega_3$ on the other pairs of basis elements vanish as well, one obtains
\begin{align*}
&\Omega_\lambda^{2n}\left(X_{1},\ldots,X_{n},Y_{1},\ldots,Y_{n},Z_{1},\ldots,Z_{n},U_{1},\ldots,U_{n}\right)\\
&=\sum_{q_{1}+q_{2}+q_{3}=n}\frac{(2n)!}{(2q_{1})!(2q_{2})!(2q_{3})!}\lambda_{1}^{2q_{1}}\lambda_{2}^{2q_{2}}\lambda_{3}^{2q_{3}}\cdot\\
&\quad\qquad\Omega_1^{2q_{1}}\wedge\Omega_2^{2q_{2}}\wedge\Omega_3^{2q_{3}}\left(X_{1},\ldots,X_{n},Y_{1},\ldots,Y_{n},Z_{1},\ldots,Z_{n},U_{1},\ldots,U_{n}\right)\\
&=\sum_{q_{1}+q_{2}+q_{3}=n}\frac{(2n)!}{(2q_{1})!(2q_{2})!(2q_{3})!}\lambda_{1}^{2q_{1}}\lambda_{2}^{2q_{2}}\lambda_{3}^{2q_{3}}\cdot\frac{1}{2^{2n}}\beta
\end{align*}
where, owing to \eqref{eq1} and  \eqref{eq2}, $\beta$ coincides with the number of permutations of $X_{1},\ldots,X_{n},Y_{1},\ldots,Y_{n},Z_{1},\ldots,Z_{n},U_{1},\ldots,U_{n}$ in which the tensorial product \allowbreak
$\Omega_1^{\otimes 2q_{1}}\otimes\Omega_2^{\otimes 2q_{2}}\otimes\Omega_3^{\otimes 2q_{3}}$ does not vanish. Precisely,
\begin{align*}
\beta=\binom{n}{q_{1},q_{2},q_{3}}\cdot(2q_{1})!\cdot(2q_{2})!\cdot(2q_{3})!\cdot 2^{2n}.
\end{align*}
Therefore
\begin{equation}\label{eq-omega}
\begin{aligned}
&\Omega_\lambda^{2n}\left(X_{1},\ldots,X_{n},Y_{1},\ldots,Y_{n},Z_{1},\ldots,Z_{n},U_{1},\ldots,U_{n}\right)\\
&=(2n)!\sum_{q_{1}+q_{2}+q_{3}=n} \binom{n}{q_{1},q_{2},q_{3}}
\lambda_{1}^{2q_{1}}\lambda_{2}^{2q_{2}}\lambda_{3}^{2q_{3}}\\
&=(2n)!(\lambda_{1}^{2}+\lambda_{2}^{2}+\lambda_{3}^{2})^{n}\\
&=(2n)!
\end{aligned}
\end{equation}
By using  \eqref{eq-omega}, it follows from \eqref{eq-tocompute}  that the almost cosymplectic sphere is taut.
\end{proof}

We now illustrate an example of almost contact $3$-structure on a $7$-dimensional manifold such that the $1$-forms $\eta_{1}$, $\eta_{2}$, $\eta_{3}$ have different constant ranks.

\begin{example}
Let $\mathfrak{g}$ be the $7$-dimensional Lie algebra with basis $\{X_{1},X_{2},X_{3},X_{4},$ $\xi_1,\xi_2,\xi_3\}$ and non-zero Lie brackets
\begin{equation*}
[X_{1},X_{4}]=\xi_3, \ \ \ [\xi_1,\xi_2]=\xi_3.
\end{equation*}
Let $G$ be a Lie group whose Lie algebra is $\mathfrak{g}$. We define an almost contact $3$-structure $(\phi_\alpha,\xi_\alpha,\eta_\alpha)$, $\alpha\in\left\{1,2,3\right\}$, on $G$ by setting
\begin{equation*}
\eta_{\alpha}(X_{i})=0, \ \ \ \eta_{\alpha}(\xi_\beta)=\delta_{\alpha\beta}
\end{equation*}
for each $i\in\left\{1,\ldots,4\right\}$ and $\alpha,\beta\in\left\{1,2,3\right\}$, and by defining
\begin{gather*}
\phi_{1}X_{1}=X_{2}, \ \ \ \phi_{1}X_{2}=-X_{1}, \ \ \ \phi_{1}X_{3}=X_{4} , \ \ \ \phi_{1}X_{4}=-X_{3}, \\
\phi_{2}X_{1}=X_{3}, \ \ \ \phi_{2}X_{2}=-X_{4}, \ \ \ \phi_{2}X_{3}=-X_{1} , \ \ \ \phi_{2}X_{4}=X_{2}, \\
\phi_{3}X_{1}=X_{4}, \ \ \ \phi_{3}X_{2}=X_{3}, \ \ \ \phi_{3}X_{3}=-X_{2} , \ \ \ \phi_{3}X_{4}=-X_{1},\\
\phi_{\alpha}\xi_{\beta} = \epsilon_{\alpha\beta\gamma}\xi_\gamma,
\end{gather*}
where $\epsilon_{\alpha\beta\gamma}$ is the total antisymmetric symbol. From the definition, it follows that $d\eta_{1}=d\eta_{2}=0$. Next, we have $2d\eta_{3}(X_{1},X_{4})=X_{1}(\eta_{3}(X_{4}))-X_{4}(\eta_{3}(X_{1}))-\eta_{3}([X_{1},X_{4}])=-\eta_{3}([X_{1},X_{4}])=-\eta_{3}(\xi_{3})=-1$, and $2d\eta_{3}(\xi_{1},\xi_{2})=-1$ by a similar computation.
One also checks that $d \eta_{3}$ is zero on any other pair of basis vector fields. Thus
$(\eta_{3}\wedge (d \eta_{3})^{2})(\xi_{3},\xi_{1},\xi_{2},X_{1},X_{4})$ is a non-zero constant at every point and $\eta_{3}\wedge (d \eta_{3})^{3}\equiv 0$.
Therefore  $\eta_1$ and $\eta_2$ have rank $1$, while $\eta_3$ has rank $5$.
\end{example}

The above example motivates the following definition.

\begin{definition}\label{quasicontact}
A \emph{quasi-contact} $p$-sphere of \emph{rank} $k$ is an almost cosymplectic $p$-sphere $\left\{(\eta_{\lambda},\Omega_{\lambda})\right\}_{\lambda\in\mathbb{S}^p}$ such that $\d\Omega_{\lambda}=0$ for any $\lambda\in\mathbb{S}^p$ and there exists a positive integer
 $k$ such that  all $1$-forms $\eta_\lambda$ have the same constant rank $k$, i.e. each  $1$-form $\eta_\lambda$ has Cartan class $k$,  for any $\lambda\in\mathbb{S}^p$.
\end{definition}

Of course contact $p$-spheres are examples of almost cosymplectic $p$-spheres which  are quasi-contact, having the maximal possible rank $2n+1=\dim(M)$. Further, notice that any cosymplectic $p$-sphere is quasi-contact of minimal rank $1$, being $\eta_\lambda$ closed and
nonsingular. Now we shall present a class of examples of  almost cosymplectic $p$-spheres which are quasi-contact of rank
$1<k<\dim(M)$.

\begin{theorem}
With  the notation of Theorem \ref{normal}, if $(M,\phi_\alpha,\xi_\alpha,\eta_\alpha,g)_{\alpha\in\left\{1,2,3\right\}}$ is a $3$-quasi-Sasakian manifold
of rank $4l+3$ then, for every $\lambda\in\bS^{2}$, $(\phi_{\lambda},\xi_{\lambda},\eta_{\lambda},g)$ defines a quasi-Sasakian structure of the same rank. In particular, the almost
cosymplectic sphere generated by $\{(\eta_{1},\Omega_{1}),(\eta_{2},\Omega_{2}),(\eta_{3},\Omega_{3})\}$   is quasi-contact of rank $4l+3$.
\end{theorem}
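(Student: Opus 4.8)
The plan is to split the claim into two parts --- (a) that each $(\phi_{\lambda},\xi_{\lambda},\eta_{\lambda},g)$, $\lambda\in\bS^{2}$, is quasi-Sasakian, and (b) that its rank equals $4l+3$ --- and then deduce the ``in particular'' from Corollary~\ref{sphere3s} and Definition~\ref{quasicontact}. Part (a) is almost immediate: a $3$-quasi-Sasakian structure is hyper-normal, so by Theorem~\ref{normal} $(\phi_{\lambda},\xi_{\lambda},\eta_{\lambda},g)$ is a normal almost contact metric structure for every $\lambda$. Its fundamental $2$-form is $\Omega_{\lambda}=g(\cdot,\phi_{\lambda}\cdot)=\lambda_{1}\Omega_{1}+\lambda_{2}\Omega_{2}+\lambda_{3}\Omega_{3}$, which is closed since each $\Omega_{\alpha}$ is closed (each $(\phi_{\alpha},\xi_{\alpha},\eta_{\alpha},g)$ being quasi-Sasakian). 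Since a normal almost contact metric structure whose fundamental form is closed is, by definition, quasi-Sasakian, $(\phi_{\lambda},\xi_{\lambda},\eta_{\lambda},g)$ is quasi-Sasakian; in particular $\d\Omega_{\lambda}=0$ for all $\lambda$, which is the first condition of Definition~\ref{quasicontact}.

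For part (b), let $\mathcal C_{\lambda}=\{X\in TM\,|\,i_{X}\eta_{\lambda}=0,\ i_{X}\d\eta_{\lambda}=0\}$ be the characteristic distribution of $(\phi_{\lambda},\xi_{\lambda},\eta_{\lambda},g)$; by the dimension formula for characteristic distributions of quasi-Sasakian structures, this structure has rank $4l+3$ exactly when $\dim\mathcal C_{\lambda}=4(n-l)$. One inclusion is purely linear: if $X\in\mathcal E$ then $i_{X}\eta_{\alpha}=0$ and $i_{X}\d\eta_{\alpha}=0$ for every $\alpha$, whence $i_{X}\eta_{\lambda}=i_{X}\d\eta_{\lambda}=0$, so $\mathcal E\subseteq\mathcal C_{\lambda}$ and therefore $\dim\mathcal C_{\lambda}\ge 4(n-l)$, i.e. $\mathrm{rank}\,\eta_{\lambda}\le 4l+3$. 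For the reverse inequality I would invoke the structural description of $3$-quasi-Sasakian manifolds of rank $4l+3$ from \cite{agag08, ijm09}: the tangent bundle splits $g$-orthogonally as $\mathcal E\oplus\mathcal E^{\perp}$ with $\dim\mathcal E^{\perp}=4l+3$, each $\phi_{\alpha}$ preserves this splitting, $\d\eta_{\alpha}$ kills $\mathcal E$, and on $\mathcal E^{\perp}$ it coincides with $\kappa\,\Omega_{\alpha}$ for one and the same nonzero constant $\kappa$ (a multiple of the constant $c$ of Theorem~\ref{principale}). Granting this, $\d\eta_{\lambda}=\kappa\,\Omega_{\lambda}$ on $\mathcal E^{\perp}$; the almost contact metric $3$-structure restricts, fiberwise, to $\mathcal E^{\perp}$, and the purely algebraic $\phi$-basis computation carried out in the proof of Corollary~\ref{sphere3s} --- now performed in the $(4l+3)$-dimensional fiber $\mathcal E^{\perp}_{p}$ --- gives $\eta_{\lambda}\wedge\Omega_{\lambda}^{2l+1}\neq 0$ there, hence $\eta_{\lambda}\wedge(\d\eta_{\lambda})^{2l+1}=\kappa^{2l+1}\,\eta_{\lambda}\wedge\Omega_{\lambda}^{2l+1}\neq 0$ at every point. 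This forces $\mathrm{rank}\,\eta_{\lambda}\ge 4l+3$, so $\mathrm{rank}\,\eta_{\lambda}=4l+3$ (equivalently $\mathcal C_{\lambda}=\mathcal E$) for every $\lambda\in\bS^{2}$, completing part (b).

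Finally, Corollary~\ref{sphere3s} guarantees that $\{(\eta_{1},\Omega_{1}),(\eta_{2},\Omega_{2}),(\eta_{3},\Omega_{3})\}$ is an almost cosymplectic sphere, and we have shown $\d\Omega_{\lambda}=0$ together with $\mathrm{rank}\,\eta_{\lambda}\equiv 4l+3$ for all $\lambda$, so Definition~\ref{quasicontact} yields that it is a quasi-contact sphere of rank $4l+3$. The step I expect to be the real obstacle is the reverse rank inequality: everything there is reduced to the identity $\d\eta_{\alpha}=\kappa\,\Omega_{\alpha}$ on $\mathcal E^{\perp}$ and to the invariance of $\mathcal E^{\perp}$ under each $\phi_{\alpha}$, which are facts about the internal geometry of $3$-quasi-Sasakian manifolds that one should quote from \cite{agag08, ijm09} rather than re-derive here.
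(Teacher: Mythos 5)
Your part (a) (hyper-normality plus Theorem \ref{normal} gives normality of $(\phi_\lambda,\xi_\lambda,\eta_\lambda,g)$, and $\d\Omega_\lambda=0$ by linearity) and your easy inclusion $\mathcal E\subseteq\mathcal C_\lambda$ (the characteristic distribution \eqref{eq:C} computed for $\eta_\lambda$) coincide with the paper's argument, as does the final appeal to Corollary \ref{sphere3s} and Definition \ref{quasicontact}. The gap is exactly where you predicted it: the lower bound $\mathrm{rank}\,\eta_\lambda\ge 4l+3$. You outsource it to a structural package --- $TM=\mathcal E\oplus\mathcal E^{\perp}$ with each $\phi_\alpha$ preserving the splitting and $\d\eta_\alpha=\kappa\,\Omega_\alpha$ on $\mathcal E^{\perp}$ for one common nonzero constant $\kappa$ --- which you propose simply to quote from \cite{agag08,ijm09}. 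That statement is substantially stronger than anything the paper uses: it amounts to saying the structure is transversally $3$-$\alpha$-Sasakian along $\mathcal E^{\perp}$, whereas the results actually invoked from \cite{agag08} (Lemmas 5.2--5.4, Corollary 3.8) are only quaternionic compatibility identities of the type $\d\eta_\beta(X,\phi_\alpha Y)=-\d\eta_\gamma(X,Y)$ and the independence of $\d\eta_\alpha(X,\phi_\alpha Y)$ of $\alpha$ on $\mathcal H$; such relations do not by themselves force each $\d\eta_\alpha$ to be a constant multiple of $\Omega_\alpha$ on $\mathcal E^{\perp}$. Until you either prove that proportionality (and the $\phi_\alpha$-invariance of $\mathcal E^{\perp}$) or locate it precisely in the literature, your volume-form computation $\eta_\lambda\wedge(\d\eta_\lambda)^{2l+1}=\kappa^{2l+1}\eta_\lambda\wedge\Omega_\lambda^{2l+1}\neq0$ has no foundation, and the key step of the theorem is unsupported.

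The paper closes this step differently and more economically, by proving the reverse inclusion $\mathcal C_\lambda\subseteq\mathcal E$ directly. Given $X\in\Gamma(\mathcal C_\lambda)$, write $X=X_{\mathcal H}+f_1\xi_1+f_2\xi_2+f_3\xi_3$; the conditions $\eta_\lambda(X)=0$ and $\d\eta_\lambda(\xi_\alpha,X)=0$, $\alpha=1,2,3$, form a homogeneous linear system in $(f_1,f_2,f_3)$ with no nonzero solution unless $\lambda=0$, so $X\in\mathcal H$. Then \cite[Lemma 5.4]{agag08} reduces the claim to $i_X\d\eta_3=0$; vanishing against the Reeb fields is \cite[Corollary 3.8]{agag08}, and for horizontal $Y$ one uses the isomorphism $\rho=-\lambda_2 I+\lambda_3\phi_1+\lambda_1\phi_3$ of $\mathcal H$ (it satisfies $(\phi_2\rho)^2=-I$) to write $Y=\rho Y'$ and compute, via Lemmas 5.2--5.3 of \cite{agag08}, $\d\eta_3(X,\rho Y')=\d\eta_\lambda(X,\phi_1 Y')=0$ because $X\in\mathcal C_\lambda$. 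Since $\mathcal C_\lambda=\mathcal E$ has dimension $4(n-l)$, the rank is $4l+3$. This route needs only the mild identities above --- no splitting theorem, no transverse $3$-$\alpha$-Sasakian statement, and no restriction of the metric $3$-structure to $\mathcal E^{\perp}$. If you wish to keep your strategy, the missing identity $\d\eta_\alpha=\kappa\,\Omega_\alpha$ on $\mathcal E^{\perp}$ must be proved separately; it is not a citation-level fact in the framework the paper sets up.
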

\begin{proof}
By Theorem \ref{normal} we already know that $(\phi_\lambda,\xi_\lambda,\eta_\lambda,g)$ is
a normal almost contact metric structure on $M$. We have to prove
that the fundamental $2$-form $\Omega_\lambda$ is closed. However, notice that
$\Omega_\lambda=\lambda_{1}\Omega_{1}+\lambda_{2}\Omega_{2}+\lambda_{3}\Omega_{3}$.
Since each structure $(\phi_\alpha,\xi_\alpha,\eta_\alpha,g)$ is
quasi-Sasakian, we get that $d\Omega_\lambda=0$.
We pass to prove that the rank of $\eta_\lambda$ is the same of that of the 3-quasi-Sasakian manifold
$(M,\phi_\alpha,\xi_\alpha,\eta_\alpha,g)_{\alpha\in\left\{1,2,3\right\}}$, namely $2p+1=4l+3$. We will prove this by showing that ${\mathcal E}  ={\mathcal C} $, where the distributions ${\mathcal E}$  and  ${\mathcal C}$ are defined by \eqref{eq:C} and \eqref{eq:E}, respectively. First we prove that ${\mathcal E}  \subset{\mathcal C} $.
Notice that ${\mathcal E}  \subset {\mathcal H}\subset \ker(\eta_{\lambda})$. Consequently, since, on ${\mathcal E}  $, $d\eta_\alpha=0$ for any
$\alpha\in\left\{1,2,3\right\}$, we get that $d\eta_\lambda =0$ on ${\mathcal E}  $. Hence ${\mathcal E}  \subset{\mathcal C} $. In order to prove the
other inclusion, we  observe that the endomorphism of $\mathcal H$ defined by $\rho  = -\lambda_{2}I + \lambda_{3}\phi_{1} +
\lambda_{1}\phi_{3}$ is in fact an isomorphism. Indeed a straightforward computation shows that $(\phi_{2}\rho)^{2} = -I$, hence the mapping $\phi_2 \rho$
is an isomorphism. It follows that $\rho$ is an isomorphism too.
Now let us consider an arbitrary $X\in\Gamma({\mathcal C} )$ and
let us prove that $X\in\Gamma({\mathcal E})$.  We
can decompose $X$ in its components along the horizontal and the
Reeb distribution, as follows
\begin{equation*}
X = X_{\mathcal H} + f_1 \xi_1 + f_2 \xi_2 + f_3 \xi_3
\end{equation*}
where $f_\alpha=\eta_\alpha(X)$ for each
$\alpha\in\left\{1,2,3\right\}$. The condition $\eta_\lambda(X)=0$ yields
\begin{equation}\label{first}
\lambda_{1}f_1 + \lambda_{2}f_2  + \lambda_{3}f_3  =0,
\end{equation}
whereas the conditions
$d\eta_\lambda(\xi_1,X)=d\eta_\lambda(\xi_2,X)=d\eta_\lambda(\xi_3,X)=0$ yield
\begin{align}
\lambda_{2}f_{3}-\lambda_{3}f_{2} &= 0,  &
-\lambda_{1}f_{3}+\lambda_{3}f_{1} &= 0,  &
\lambda_{1}f_{2}-\lambda_{2}f_{1} &= 0. \label{second}
\end{align}
It is easy to see that the homogeneous linear system of the four
equations \eqref{first}, \eqref{second} admits
non-zero solutions if and only if
$\lambda_{1}=\lambda_{2}=\lambda_{3}=0$. Since this circumstance can
not happen, we conclude that $f_{1}=f_{2}=f_{3}=0$ and hence
$X\in{\mathcal H}$. Finally, in order to prove that $X$ belongs to
${\mathcal E}  $, it remains to prove that
$d\eta_{\alpha}(X,Y)=0$ for any $Y\in\Gamma(TM)$ and for any
$\alpha\in\left\{1,2,3\right\}$. In fact, in view of  \cite[Lemma
5.4]{agag08} it is enough to prove that $d\eta_{\alpha}(X,Y)=0$  for
\emph{some} $\alpha\in\left\{1,2,3\right\}$, for instance for
$\alpha=3$. By  \cite[Corollary 3.8]{agag08} we have that
$d\eta_{3}(X,\xi_\beta)=0$ for any $\beta\in\left\{1,2,3\right\}$.
Thus we may assume that $Y\in\Gamma({\mathcal H})$. Since $\rho$ is
an isomorphism, there exists $Y'\in\Gamma({\mathcal H})$ such that
$Y=\rho Y' =
-\lambda_{2}Y'+\lambda_{3}\phi_{1}Y'+\lambda_{1}\phi_{3}Y'$.
Therefore, by using Lemma 5.2 -- Lemma 5.3 of \cite{agag08}, we get
\begin{align*}
d\eta_{3}(X,Y)&=d\eta_{3}(X,\rho Y')=\lambda_{1}d\eta_{3}(X,\phi_{3}Y') - \lambda_{2}d\eta_{3}(X,Y') + \lambda_{3}d\eta_{3}(X,\phi_{1}Y')\\
&=\lambda_{1}d\eta_{1}(X,\phi_{1}Y') + \lambda_{2}d\eta_{2}(X,\phi_{1}Y') + \lambda_{3}d\eta_{3}(X,\phi_{1}Y')
\\&=d\eta_\lambda(X,\phi_{1}Y')=0,
\end{align*}
where the last equality follows from the fact that
$X\in\Gamma({\mathcal C} )$.
\end{proof}

\begin{corollary}\label{sasakian}
Any $3$-Sasakian manifold admits an almost cosymplectic sphere of
Sasakian structures which is both taut and round.
\end{corollary}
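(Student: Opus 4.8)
The plan is to read the statement off Corollary~\ref{sphere3s} and Theorem~\ref{normal}, the only additional input being the observation that the normal almost contact metric structures these results produce are in fact Sasakian, not merely normal.

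First I would observe that a $3$-Sasakian manifold $(M,\phi_\alpha,\xi_\alpha,\eta_\alpha,g)_{\alpha\in\{1,2,3\}}$ is, in particular, a hyper-normal almost contact metric $3$-structure: each $(\phi_\alpha,\xi_\alpha,\eta_\alpha,g)$ is Sasakian, hence normal, and the fundamental $2$-forms $\Omega_\alpha:=g(\cdot,\phi_\alpha\cdot)$ satisfy $d\eta_\alpha=2\Omega_\alpha$, so that $d\Omega_\alpha=0$; thus $M$ is also $3$-quasi-Sasakian. Since $\dim M=4n+3$ and $\eta_\alpha\wedge(d\eta_\alpha)^n\neq 0$, the rank of $M$ is the maximal one, $4n+3$.

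Next I would invoke Corollary~\ref{sphere3s} directly: the family $\{(\eta_1,\Omega_1),(\eta_2,\Omega_2),(\eta_3,\Omega_3)\}$ is an almost cosymplectic sphere on $M$ which is both round and taut, and for every $\lambda=(\lambda_1,\lambda_2,\lambda_3)\in\bS^2$ its $\lambda$-member is $(\eta_\lambda,\Omega_\lambda)$ with $\Omega_\lambda=\lambda_1\Omega_1+\lambda_2\Omega_2+\lambda_3\Omega_3=g(\cdot,\phi_\lambda\cdot)$, where $\phi_\lambda=\lambda_1\phi_1+\lambda_2\phi_2+\lambda_3\phi_3$. It then remains to check that each $(\phi_\lambda,\xi_\lambda,\eta_\lambda,g)$ is a Sasakian structure. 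By Theorem~\ref{normal} (applicable since $M$ is hyper-normal) this datum is a normal almost contact metric structure, and $d\eta_\lambda=\sum_\alpha\lambda_\alpha\,d\eta_\alpha=\sum_\alpha\lambda_\alpha\cdot 2\Omega_\alpha=2\Omega_\lambda$, so it is moreover a contact metric structure; a normal contact metric structure is Sasakian. Hence $\{(\eta_1,\Omega_1),(\eta_2,\Omega_2),(\eta_3,\Omega_3)\}$ is a sphere of Sasakian structures that is both taut and round, which is the assertion. (Equivalently, one may observe that, $M$ having maximal rank, the preceding theorem already yields that $(\phi_\lambda,\xi_\lambda,\eta_\lambda,g)$ is quasi-Sasakian of maximal rank, hence Sasakian.)

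I do not expect a genuine obstacle: the statement is a corollary in the literal sense, obtained by combining Theorem~\ref{normal} and Corollary~\ref{sphere3s}. The one point that genuinely requires attention is the last one — promoting ``normal almost contact metric'' to ``Sasakian'' — and for this one must use the contact condition $d\eta_\lambda=2\Omega_\lambda$ (equivalently, the maximality of the rank), since normality by itself would not suffice.
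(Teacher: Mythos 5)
Your argument is correct and is essentially the paper's: the corollary is stated there without proof precisely because it follows by combining Corollary~\ref{sphere3s} (tautness and roundness) with Theorem~\ref{normal} (normality of each $(\phi_\lambda,\xi_\lambda,\eta_\lambda,g)$) and the linearity observation $d\eta_\lambda=\sum_\alpha\lambda_\alpha\,d\eta_\alpha=c\,\Omega_\lambda$ (with the same universal constant $c$ as in the Sasakian condition for each generator), so that each member of the sphere is a normal contact metric, hence Sasakian, structure. The only point to be wary of is your parenthetical shortcut ``quasi-Sasakian of maximal rank, hence Sasakian,'' which is not a valid implication in general; your main argument does not rely on it, so the proof stands.
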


\begin{corollary}
Any $3$-cosymplectic manifold admits an almost cosymplectic sphere of
cosymplectic structures which is both taut and round.
\end{corollary}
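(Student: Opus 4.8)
The plan is to derive this corollary as a direct specialization of the preceding theorem, observing that a $3$-cosymplectic manifold is nothing but a $3$-quasi-Sasakian manifold of rank $1$, and that on it the cosymplectic structures $(\eta_\alpha,\Omega_\alpha)$ are genuinely cosymplectic (not merely quasi-Sasakian). First I would recall that, by definition, a $3$-cosymplectic manifold is an almost contact metric $3$-structure $(\phi_\alpha,\xi_\alpha,\eta_\alpha,g)_{\alpha\in\{1,2,3\}}$ in which each structure $(\phi_\alpha,\xi_\alpha,\eta_\alpha,g)$ is cosymplectic, i.e.\ normal with both $\eta_\alpha$ and $\Omega_\alpha:=g(\cdot,\phi_\alpha\cdot)$ closed. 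In particular such a manifold is hyper-normal and, by Theorem~\ref{principale}, has rank $1$, which forces $c=0$ in \eqref{lie}.

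Next I would apply Corollary~\ref{sphere3s}: since a $3$-cosymplectic manifold carries in particular an almost contact metric $3$-structure, the family $\{(\eta_1,\Omega_1),(\eta_2,\Omega_2),(\eta_3,\Omega_3)\}$ with $\Omega_\alpha=g(\cdot,\phi_\alpha\cdot)$ is an almost cosymplectic sphere which is both round and taut. It remains only to upgrade ``almost cosymplectic'' to ``cosymplectic'' for every member of the sphere. For this, fix $\lambda=(\lambda_1,\lambda_2,\lambda_3)\in\bS^2$ and note that $\eta_\lambda=\lambda_1\eta_1+\lambda_2\eta_2+\lambda_3\eta_3$ is closed because each $\eta_\alpha$ is closed, and likewise $\Omega_\lambda=\lambda_1\Omega_1+\lambda_2\Omega_2+\lambda_3\Omega_3$ is closed because each $\Omega_\alpha$ is closed; thus $(\eta_\lambda,\Omega_\lambda)$ is a cosymplectic structure. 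By Theorem~\ref{normal} it is in fact a normal almost contact metric structure, so $(\phi_\lambda,\xi_\lambda,\eta_\lambda,g)$ is a cosymplectic structure in the metric sense. Hence $\{(\eta_1,\Omega_1),(\eta_2,\Omega_2),(\eta_3,\Omega_3)\}$ is an almost cosymplectic sphere all of whose members are (metric) cosymplectic, which is precisely an almost cosymplectic sphere of cosymplectic structures, and it is round and taut by Corollary~\ref{sphere3s}.

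There is essentially no obstacle here; the only point requiring minimal care is the bookkeeping that each of the three conditions --- $d\eta_\lambda=0$, $d\Omega_\lambda=0$, and normality of $\phi_\lambda$ --- is inherited from the corresponding property of the generators, the first two by linearity of $d$ and the third from Theorem~\ref{normal} (using hyper-normality, which holds since each $(\phi_\alpha,\xi_\alpha,\eta_\alpha,g)$ is cosymplectic hence normal). Alternatively, and perhaps more cleanly, one may simply invoke the previous theorem in the degenerate case $l=0$: a $3$-cosymplectic manifold is a $3$-quasi-Sasakian manifold of rank $4\cdot 0+3=3$? --- no; here rank $1$ is the correct value and the relevant statement is that the sphere is quasi-contact of rank $1$, i.e.\ cosymplectic. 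So the cleanest route is the two-line argument above: closedness of $\eta_\lambda$ and $\Omega_\lambda$ is immediate, normality is Theorem~\ref{normal}, and tautness and roundness are Corollary~\ref{sphere3s}.
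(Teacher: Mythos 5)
Your proof is correct and matches the paper's (implicit) argument: the corollary is a direct specialization of Corollary \ref{sphere3s}, with cosymplecticity of each $(\eta_\lambda,\Omega_\lambda)$ following from linearity of $d$ applied to the closed generators. The digression about rank at the end is unnecessary (for the statement as worded, only closedness of $\eta_\lambda$ and $\Omega_\lambda$ plus tautness and roundness from Corollary \ref{sphere3s} are needed), but it does not affect the validity of the argument.
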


\begin{remark}
Notice that Corollary \ref{sasakian} improves the result of Zessin
that  any 3-Sasakian manifold admits a contact sphere which is both taut and round (cf.
\cite[Proposition 4]{zessin05}).
\end{remark}

\medskip

Arguing in a similar way to the proof of Theorem \ref{normal} we can find the following classes of examples of cosymplectic spheres.

\begin{example}
Let $N$ be a smooth manifold endowed with a hyperholomorphic symplectic structure in the sense of \cite{bande} (note that these geometric structures were
also studied in \cite{xu}, but with a different name). Namely, on $N$ there are defined three symplectic structures $\omega_1$, $\omega_2$, $\omega_3$
related to each other by means of the relations
\begin{equation*}
\omega_{\alpha}^{\sharp}\circ\omega_{\beta}^{\flat}=-\omega_{\beta}^{\sharp}\circ\omega_{\alpha}^{\flat}
\end{equation*}
for any $\alpha,\beta\in\left\{1,2,3\right\}$, $\alpha\neq\beta$. As a consequence, one can define three almost complex structures which satisfy the quaternionic identities.
On $M:=N\times\mathbb{R}^3$ we define for each even permutation $(\alpha,\beta,\gamma)$
of $\left\{1,2,3\right\}$
\begin{equation*}
\eta_\alpha:=dt_\alpha, \ \ \ \Omega_\alpha:=\omega_\alpha+\eta_{\beta}\wedge\eta_{\gamma}
\end{equation*}
where $(t_1,t_2,t_3)$ are the global coordinates of $\mathbb{R}^3$. Then, arguing as in the proof of Corollary \ref{sphere3s}, one can prove that
$\left\{(\eta_{1},\Omega_{1}),(\eta_{2},\Omega_{2}),(\eta_{3},\Omega_{3})\right\}$ generates a cosymplectic sphere on $M$.
\end{example}

\end{document}